\newcommand{\dia}{\hfill{$\diamond$}}
\newcommand{\vc}{\textsc{Vertex Cover}}
\newcommand{\svc}{\textsc{Subset Vertex Cover}}
\newtheorem{open}{Open Problem}
\newtheorem{theorem}{Theorem}
\newtheorem{lemma}[theorem]{Lemma}
\newtheorem{corollary}[theorem]{Corollary}
\newtheorem{definition}[theorem]{Definition}
\newtheorem{proposition}[theorem]{Proposition}
\newtheorem{remark}[theorem]{Remark}
\newtheorem{claim}{Claim}[theorem]
\newcommand{\ssi}{\subseteq_i}
\newcommand{\NP}{{\sf NP}}
\newcommand{\sP}{{\sf P}}
\newcommand{\calA}{{\cal A}}
\newcommand{\calB}{{\cal B}}
\newcommand{\calL}{{\cal L}}
\newcommand{\calR}{{\cal R}}
\newcommand{\mim}{\mathsf{mim}}
\newcommand{\cw}{\mathsf{cw}}
\newcommand{\rep}{\mathsf{rep}}
\newcommand{\best}{\mathsf{best}}
\newcommand{\we}{\omega}
\newcommand{\nec}{\mathsf{nec}}
\newcommand{\wrt}{with respect to\ }
\newcommand{\Ti}{\ensuremath{T}}
\newcommand{\Tis}{$\Ti$-independent set\xspace}
\newcommand{\Tiss}{$\Ti$-independent sets\xspace}
\newcommand{\problemdef}[3]{
	\begin{center}
		\begin{boxedminipage}{1.02\textwidth}
			\textsc{{#1}}\\[1pt]  
			\begin{tabular}{ r p{0.8\textwidth}}
				\textit{~~~~Instance:} & {#2}\\
				\textit{Question:} & {#3}
			\end{tabular}
		\end{boxedminipage}
	\end{center}
}
\definecolor{lime}{HTML}{A6CE39}
\DeclareRobustCommand{\orcidicon}{
	\begin{tikzpicture}
		\draw[lime, fill=lime] (0,0) 
		circle [radius=0.16] 
		node[white] {{\fontfamily{qag}\selectfont \tiny ID}};
		\draw[white, fill=white] (-0.0625,0.095) 
		circle [radius=0.007];
	\end{tikzpicture}
	\hspace{-2mm}
}
\xdef\csname orcid\x\endcsname{\noexpand\href{https://orcid.org/\csname orcidauthor\x\endcsname}{\noexpand\orcidicon}}
\title{Computing Subset Vertex Covers in $H$-Free Graphs}
\author[1]{Nick Brettell\footnote{Nick Brettell was supported by the New Zealand Marsden Fund.}\orcidA{}}
\author[2]{Jelle J. Oostveen\footnote{Jelle Oostveen was supported by the NWO grant OCENW.KLEIN.114 (PACAN).}\orcidE{}}
\author[3]{Sukanya Pandey\orcidB{}}
\author[4]{Dani\"el Paulusma\orcidC{}}
\author[5]{Johannes Rauch\footnote{Johannes Rauch was supported by the German Academic Scholarship Foundation (Studienstiftung des deutschen Volkes).}\orcidJ}
\author[2]{\mbox{Erik Jan} van Leeuwen\orcidD{}}
\affil[1]{Victoria University of Wellington, Wellington, New Zealand \texttt{nick.brettell@vuw.ac.nz}}
\affil[2]{Utrecht University, Utrecht, The Netherlands \texttt{\{j.j.oostveen,e.j.vanleeuwen\}@uu.nl}}
\affil[3]{RWTH Aachen University, Aachen, Germany \texttt{pandey@algo.rwth-aachen.de}}
\affil[4]{Durham University, Durham, United Kingdom \texttt{daniel.paulusma@durham.ac.uk}}
\affil[5]{Ulm University, Ulm, Germany \texttt{johannes.rauch@uni-ulm.de}}
\begin{document}
\maketitle

\begin{abstract} We consider a natural generalization of {\sc Vertex Cover}: the {\sc Subset Vertex Cover} problem, which is to decide for a graph $G=(V,E)$, a subset $T\subseteq V$ and integer~$k$, if $V$ has a subset $S$ of size at most $k$, such that $S$ contains at least one end-vertex of every edge incident to a vertex of $T$. A graph is $H$-free if it does not contain $H$ as an induced subgraph. 
We solve two open problems from the literature by proving
 that {\sc Subset Vertex Cover} is \NP-complete on subcubic (claw,diamond)-free planar graphs and on $2$-unipolar graphs, a subclass of $2P_3$-free weakly chordal graphs. Our results show for the first time that {\sc Subset Vertex Cover} is computationally harder than {\sc Vertex Cover} (under $\sP\neq \NP$).
We also prove new polynomial time results, some of which follow from a reduction to {\sc Vertex Cover} restricted to classes of probe graphs.
We first give a dichotomy on graphs where $G[T]$ is $H$-free. Namely, we show that 
{\sc Subset Vertex Cover} is polynomial-time solvable on graphs $G$, for which $G[T]$ is $H$-free, if $H=sP_1+tP_2$ and \NP-complete otherwise. Moreover, we prove that {\sc Subset Vertex Cover} is polynomial-time solvable for $(sP_1+P_2+P_3)$-free graphs and bounded mim-width graphs. By combining our new results with known results we obtain a partial complexity classification for {\sc Subset Vertex Cover} on $H$-free graphs.
\end{abstract}

\section{Introduction}\label{s-intro}

We consider a natural generalization of the classical {\sc Vertex Cover} problem: the {\sc Subset Vertex Cover} problem, introduced in~\cite{BJPP22}. 
Let $G=(V,E)$ be a graph and $T$ be a subset of $V$. 
A set $S\subseteq V$~is a {\it $T$-vertex cover} of $G$ if $S$ contains at least one end-vertex of every edge incident to a vertex of $T$. We note that~$T$ itself is a $T$-vertex cover. However, a graph may have much smaller $T$-vertex covers. For example, if $G$ is a star whose leaves form $T$, then the center of $G$ forms a $T$-vertex cover. We can now define the problem; see also Fig.~\ref{fig:svcillustrated}.

\problemdef{{\sc Subset Vertex Cover}}{A graph $G=(V,E)$, a subset $T\subseteq V$, and a positive integer $k$.}{Does $G$ have a $T$-vertex cover $S$ with $|S|\leq k$?} 

\noindent
If we set $T=V$, then we obtain the {\sc Vertex Cover} problem. Hence, as {\sc Vertex Cover} is \NP-complete, so is {\sc Subset Vertex Cover}.

\begin{figure}[b]
\centering
\includegraphics[scale=0.8]{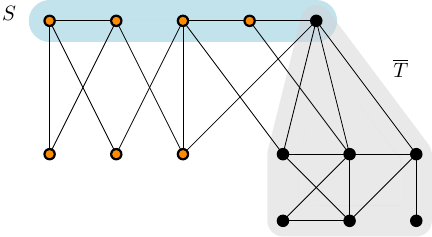}
\caption{An instance $(G,T,k)$ of \svc{}, where $T$ consists of the orange vertices, together with a solution $S$ (a $T$-vertex cover of size~$5$). Note that $S$ consists of four vertices of $T$ and one vertex of $\overline{T}=V\setminus T$.}\label{fig:svcillustrated}
\end{figure}

To obtain a better understanding of the complexity of an \NP-complete graph problem, we may restrict the input to some special graph class. In particular, {\it hereditary} graph classes, which are the classes closed under vertex deletion, have been studied intensively for this purpose. It is readily seen that a graph class~${\cal G}$ is hereditary if and only if ${\cal G}$ is characterized by a unique minimal set of forbidden induced subgraphs~${\cal F}_G$. Hence, for a systematic study, it is common to first consider the case where ${\cal F}_{\cal G}$ has size~$1$. This is also the approach we follow in this paper.
So, for a graph $H$, we set ${\cal F}_{\cal G}=\{H\}$ for some graph~$H$ and consider the class of $H$-free graphs (graphs that do not contain $H$ as an induced subgraph). We now consider the following research question:

\medskip
\noindent
{\it For which graphs $H$ is {\sc Subset Vertex Cover}, restricted to $H$-free graphs, still \NP-complete and for which graphs $H$ does it become polynomial-time solvable?}

\medskip
\noindent
We will also address two open problems posed in~\cite{BJPP22} (see Section~\ref{s-pre} for any undefined terminology):

\begin{itemize}
\item [Q1.] What is the complexity of {\sc Subset Vertex Cover} for claw-free graphs?
\item [Q2.] Is {\sc Subset Vertex Cover} \NP-complete for $P_t$-free graphs for some $t$?
\end{itemize}

\noindent
The first question is of interest, as {\sc Vertex Cover} is polynomial-time solvable even on $rK_{1,3}$-free graphs for every $r\geq 1$~\cite{BM18}, where $rK_{1,3}$ is the disjoint union of $r$ claws (previously this was known for $rP_3$-free graphs~\cite{Lo17} and $2P_3$-free graphs~\cite{LM12}).
The second question is of interest due to some recent quasi-polynomial-time results. Namely, Gartland and Lokshtanov~\cite{GL20} proved that for every integer~$t$, {\sc Vertex Cover} can be solved in $n^{O(\log^3n)}$-time for $P_t$-free graphs. Afterwards, 
Pilipczuk, Pilipczuk and Rz{\k a}\.{z}ewski~\cite{PPR21} improved the running time to $n^{O(\log^2n)}$ time. 
Even more recently, Gartland et al.~\cite{GLMPPR} extended the results of~\cite{GL20,PPR21} from $P_t$-free graphs to $H$-free graphs where every connected component of $H$ is a path or a subdivided claw.

Gr\"otschel, Lov\'asz, and Schrijver~\cite{GLS84} proved that {\sc Vertex Cover} can be solved in polynomial time for the class of perfect graphs. The class of perfect graphs is a rich graph class, which includes well-known graph classes, such as bipartite graphs and (weakly) chordal graphs. 

Before we present our results, we first briefly discuss the relevant literature.

\subsection{Existing Results}\label{s-existing}

Whenever {\sc Vertex Cover} is \NP-complete for some graph class~${\cal G}$, then so is the more general problem {\sc Subset Vertex Cover}. Moreover, {\sc Subset Vertex Cover} can be polynomially reduced to {\sc Vertex Cover}: given an instance $(G,T,k)$ of the former problem, remove all edges not incident to a vertex of $T$ to obtain an instance $(G',k)$ of the latter problem. Hence, we obtain:

\begin{proposition}\label{p-easy}
The problems {\sc Vertex Cover} and {\sc Subset Vertex Cover} are polynomially equivalent for every graph class closed under edge deletion.
\end{proposition}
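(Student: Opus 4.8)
The plan is to establish two polynomial-time reductions, one in each direction, and to verify that each keeps the underlying graph inside the class in question. Fix a graph class ${\cal G}$ that is closed under edge deletion; I want to show that a polynomial-time algorithm (or an \NP-completeness proof) for one of {\sc Vertex Cover}, {\sc Subset Vertex Cover} on ${\cal G}$ transfers to the other.

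First I would treat the direction from {\sc Vertex Cover} to {\sc Subset Vertex Cover}. Given an instance $(G,k)$ with $G=(V,E)\in{\cal G}$, I map it to the instance $(G,V,k)$, i.e.\ I take $T=V$, as already noted in the introduction. Since every edge of $G$ is incident to a vertex of $V=T$, a set $S$ is a $T$-vertex cover of $G$ exactly when it covers every edge of $G$, that is, exactly when it is a vertex cover of $G$. Hence $(G,k)$ and $(G,V,k)$ are equivalent; the reduction is clearly computable in polynomial time and leaves the graph untouched, so the produced instance still lies in ${\cal G}$.

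Second I would treat the direction from {\sc Subset Vertex Cover} to {\sc Vertex Cover}, following the reduction sketched just before the statement. Given $(G,T,k)$ with $G=(V,E)\in{\cal G}$, I form $G'$ by deleting from $G$ every edge not incident to a vertex of $T$. The edge set of $G'$ is then precisely the set of edges of $G$ incident to $T$, so $S$ is a vertex cover of $G'$ if and only if $S$ covers every edge of $G$ incident to $T$, which is exactly the defining condition for $S$ to be a $T$-vertex cover of $G$. Thus $(G,T,k)$ and $(G',k)$ are equivalent, and $G'$ is constructible in polynomial time.

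The only point that genuinely uses the hypothesis — and the only place where anything could go wrong — is that each constructed instance must stay inside ${\cal G}$ for these to be reductions \emph{within the class}. In the first direction this is immediate, as the graph is unchanged. In the second direction $G'$ is obtained from $G\in{\cal G}$ purely by deleting edges, so $G'\in{\cal G}$ precisely because ${\cal G}$ is closed under edge deletion; this is exactly where the hypothesis is needed. Combining the two directions shows that the two problems are polynomially equivalent on ${\cal G}$, which completes the argument.
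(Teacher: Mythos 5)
Your proof is correct and follows essentially the same approach as the paper: the trivial reduction $T=V$ in one direction, and deleting all edges not incident to $T$ in the other, with closure under edge deletion guaranteeing the constructed graph stays in the class. Nothing is missing.
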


\noindent
For example, the class of bipartite graphs is closed under edge deletion and {\sc Vertex Cover} is polynomial-time solvable on bipartite graphs. Hence,
by Proposition~\ref{p-easy}, {\sc Subset Vertex Cover} is polynomial-time solvable on bipartite graphs. 
However, a class of $H$-free graphs is only closed under edge deletion if $H$ is a complete graph, and {\sc Vertex Cover} is \NP-complete even for triangle-free graphs~\cite{Po74}. This means that there could still exist graphs $H$ such that {\sc Vertex Cover} and {\sc Subset Vertex Cover} behave differently if the former problem is (quasi)polynomial-time solvable on $H$-free graphs.
The following well-known result of Alekseev~\cite{Al82} restricts the structure of such graphs~$H$.

\begin{theorem}[\cite{Al82}]\label{t-hard}
For every graph $H$ that contains a cycle or a connected component with two vertices of degree at least~$3$, {\sc Vertex Cover}, and thus {\sc Subset Vertex Cover}, is \NP-complete on $H$-free graphs.
\end{theorem}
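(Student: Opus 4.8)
The plan is to give a polynomial-time reduction from \vc{} on general graphs, which is \NP-complete, to \vc{} on $H$-free graphs; the statement for \svc{} then follows at once by setting $T=V$, exactly as in the observation preceding the theorem. The engine of the reduction is edge subdivision, and the key fact I would invoke is the classical identity of Poljak: if $G'$ arises from $G$ by subdividing each edge twice (replacing every edge by a path with two internal vertices), then a maximum independent set of $G'$ has size $\alpha(G)+|E(G)|$, and since a minimum vertex cover is the complement of a maximum independent set this gives an exact, polynomially computable correspondence between the two \vc{} instances.

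I would iterate this operation $j$ times. Each iteration triples the edge count and multiplies every edge-length by $3$, so after $j$ steps each original edge has become an induced path with $3^j-1$ internal vertices, and $\alpha$ has changed by an amount I can write in closed form by telescoping. The crucial point is that $j$ need only be large enough that $3^j>|V(H)|$, which is a constant for fixed $H$; hence the overall blow-up is a constant factor and the reduction runs in polynomial time. Write $G^*$ for the resulting graph.

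It remains to verify the structural lemma that $G^*$ is $H$-free, which I expect to be the main obstacle, and I would split it according to the two cases in the hypothesis. If $H$ contains a cycle, its girth is at most $|V(H)|$, whereas subdivision only lengthens cycles and creates no new ones, so the girth of $G^*$ exceeds $|V(H)|$ and $H$ cannot occur even as a (not necessarily induced) subgraph. If instead $H$ is a forest with a connected component $C$ having two vertices $x,y$ of degree at least $3$, I would exploit that the only vertices of $G^*$ of degree at least $3$ are the original vertices of $G$, every subdivision vertex having degree $2$. In any embedding of $H$ as an induced subgraph one has $\deg_{G^*}(\mathrm{img}(v))\ge \deg_H(v)$, so the images of $x$ and $y$ are forced to be original vertices; yet the unique $x$--$y$ path in the tree $C$ maps to a walk with at most $|V(H)|$ edges, placing these two original vertices at distance at most $|V(H)|$ in $G^*$. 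This contradicts the fact that, after the subdivisions, any two distinct original vertices lie at distance at least $3^j>|V(H)|$.

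Since every induced copy of $H$ would contain an induced copy of $C$, and hence both $x$ and $y$, the two cases together show that $G^*$ is $H$-free. The reduction therefore sends \vc{} instances to \vc{} instances on $H$-free graphs while shifting the optimum by a computable constant, which proves that \vc{}, and consequently \svc{}, is \NP-complete on $H$-free graphs. The delicate point throughout is the second case: one must pin the high-degree vertices of the putative induced copy of $H$ to the original vertices of $G$ using only the one-directional degree inequality for induced subgraphs, and then play this off against the large pairwise distances that the iterated subdivision guarantees.
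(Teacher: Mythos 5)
Your proof is correct. The paper does not prove this theorem itself --- it is stated as a known result of Alekseev~\cite{Al82} --- but your argument (iterated $2$-subdivision \`a la Poljak until $3^j>|V(H)|$, then killing induced copies of $H$ via the girth bound in the cyclic case and via the degree-forcing-plus-distance argument in the forest case) is precisely the standard proof of that result, and it uses the same engine (Lemma~\ref{l-po}) that the paper deploys in its own hardness reductions in Section~\ref{s-hard}.
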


\noindent
Due to Theorem~\ref{t-hard} and the result of Gartland et al.~\cite{GLMPPR}, every graph $H$ is now either classified as a quasi-polynomial case or \NP-hard case for {\sc Vertex Cover}. For {\sc Subset Vertex Cover} the situation is much less clear. So far, only one positive result for $H$-free graphs is known, due to Brettell et al.~\cite{BJPP22}.
	
\begin{theorem}[\cite{BJPP22}]\label{t-sp1p4}
For every $s\geq 0$, {\sc Subset Vertex Cover} is polynomial-time solvable on $(sP_1+P_4)$-free graphs.
\end{theorem}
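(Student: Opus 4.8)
The plan is to recast \svc{} as a profit/cost optimisation problem and then exploit the structure of $(sP_1+P_4)$-free graphs by guessing a bounded part of an optimal solution so that the remainder lives in a cograph. First I would observe that a set $S$ is a $T$-vertex cover if and only if its complement $I=V\setminus S$ contains no edge incident to $T$; equivalently, every vertex of $I\cap T$ is isolated in $G[I]$. Writing $A=I\cap T$ and $B=I\cap\overline{T}$, this says that $A$ is independent in $G[T]$ and anticomplete to $B$, so for a fixed independent $A$ the best choice is $B=\overline{T}\setminus N(A)$. Hence maximising $|I|$ amounts to maximising $g(A):=|A|-|N_{\overline T}(A)|$ over independent sets $A\subseteq T$, and a minimum $T$-vertex cover has size $|T|-\max_A g(A)$. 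So it suffices to solve this \emph{reduced problem}.

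For the base case $s=0$, where $G$ is a cograph, I would solve the reduced problem by a dynamic program over the cotree, storing at each node the value $f^+$, namely the maximum of $g$ over \emph{nonempty} independent sets $A$ in that subtree (and setting $f=\max(0,f^+)$). At a union node the two children simply add, since $A$ may split between the parts with no cross-edges. The delicate case is the join node: here any nonempty independent $A$ must lie entirely in one child and, being complete to the other child, pays for every non-terminal on that side, giving $f^+ = \max(f_1^+ - |\overline T_2|,\ f_2^+-|\overline T_1|)$. This yields a near-linear algorithm for $s=0$.

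The crux is the extension to $(sP_1+P_4)$-free graphs. The key structural lemma I would prove is: if $Y\subseteq T$ is independent with $|Y|=s$, then $G-N[Y]$ is $P_4$-free, because any induced $P_4$ avoiding $N[Y]$ is anticomplete to $Y$ and would form an induced $sP_1+P_4$. I would then split on the size of an optimal solution $A^\ast$. If $|A^\ast|<s$, I enumerate all independent sets $A$ of size less than $s$ directly (there are $O(n^{s-1})$ of them) and evaluate $g$. If $|A^\ast|\ge s$, I guess a size-$s$ independent subset $Y\subseteq A^\ast$ (again only $O(n^{s})$ guesses). Given such a $Y$, the remaining part $A'=A^\ast\setminus Y$ lies in the cograph $H=G-N[Y]$, and since the non-terminals surviving in $H$ are exactly those not already dominated by $Y$, the objective decomposes cleanly as $g(Y\cup A')=\big(|Y|-|N_{\overline T}(Y)|\big)+g_H(A')$, where $g_H$ is the reduced objective inside $H$. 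Thus for each $Y$ I run the cotree dynamic program on $H$ and add the constant $|Y|-|N_{\overline T}(Y)|$, finally taking the maximum over all guesses and all small $A$.

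The main obstacle is getting this interface exactly right: establishing that $G-N[Y]$ is genuinely $P_4$-free, and—more delicately—verifying that the profit/cost objective splits additively across the guessed set $Y$ and the cograph part with no over- or under-counting of non-terminal neighbours shared between $Y$ and $A'$. The join rule of the cotree dynamic program, in which a selected independent set must be charged for \emph{all} non-terminals on the opposite side, is the other place that requires care. Once these are in place, everything reduces to a polynomial number of near-linear cograph computations, so for fixed $s$ the overall running time is polynomial.
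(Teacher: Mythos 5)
This statement is imported by the paper from~\cite{BJPP22} and is not proved in the text, so there is no in-paper proof to compare against; I can only assess your argument on its own terms, and it appears to be a correct self-contained derivation. Your reformulation is sound: $S$ is a $T$-vertex cover iff $I=V\setminus S$ is what the paper later calls a $T$-independent set, and for a fixed independent $A=I\cap T$ the optimal completion is indeed $B=\overline{T}\setminus N(A)$, giving $|S|=|T|-g(A)$ with $g(A)=|A|-|N_{\overline{T}}(A)|$. The cotree recurrences are right (at a union node one must allow one side to be empty, which your $f=\max(0,f^+)$ convention handles; at a join node a nonempty independent set lies in one child and is charged $|\overline{T}\cap V_2|$ or $|\overline{T}\cap V_1|$). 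The structural lemma that $G-N[Y]$ is $P_4$-free for an independent $Y$ of size $s$ is immediate from $(sP_1+P_4)$-freeness, and the additive split $g(Y\cup A')=(|Y|-|N_{\overline{T}}(Y)|)+g_H(A')$ holds exactly because a non-terminal neighbour of $A'$ that is \emph{not} counted against $Y$ is by definition outside $N[Y]$ and hence survives in $H$ — so the two neighbourhood counts partition $N_{\overline{T}}(Y\cup A')$ and there is no double-counting. Your one-shot guess of a size-$s$ independent $Y\subseteq A^\ast$ plays the same role as applying the paper's Lemma~\ref{lem:P1add} (the ``$+P_1$'' reduction from~\cite{BJPP22}) $s$ times to reduce to the $P_4$-free case, so the overall strategy is in the same spirit as what the cited source presumably does, with your cograph dynamic program for the profit objective $g$ serving as the base case. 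The running time, $O(n^s)$ guesses each followed by a polynomial cograph computation, is polynomial for fixed $s$ as required.
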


\noindent
We notice one more result for {\sc Subset Vertex Cover}, which can be obtained by making a 
connection to 
the concept of \emph{probe graphs}, introduced by Zhang et al.~\cite{Z94} in the context of genome research. Suppose $\mathcal{G}$ is a class of graphs. The class of probe graphs $\mathcal{G}_p$ of~$\mathcal{G}$ contains all graphs $G$ such that there exist an independent set $N$ in $G$ and a set of edges $F\subseteq {N \choose 2}$ such that $G + F \in \mathcal{G}$.
The intuition is that the vertices of $P = V(G) \setminus N$ are the probes, for which there is structural information, and the vertices of $N$ are the non-probes, for which there is no concrete information except that the ``actual graph'' $G + F$ belongs to some graph class, achieved by adding some edges between vertices of $N$.

We recall that an instance $(G, T, k)$ of \svc{} is reducible to an instance $(G', k)$ of \vc{}, where $G'$ is the graph obtained from $G$ by deleting all edges between vertices not in $T$. In other words,
we can solve \svc{} in polynomial time on a graph class~${\cal G}$ if we can solve \vc{} in polynomial time on ${\cal G}_p$.
Chang et al.~\cite{CKKLP05} observed that probe split graphs are perfect, and Golumbic and Lipshteyn~\cite{GL04} showed that probe interval graphs are probe chordal graphs, and that probe chordal graphs are perfect. As \vc{} is polynomial-time solvable on perfect graphs~\cite{GLS84}, it follows that \svc{} can be solved in polynomial time on chordal graphs.

\begin{theorem}[\cite{GL04,GLS84}]\label{t-chordal}
{\sc Subset Vertex Cover} can be solved in polynomial time on chordal graphs.
\end{theorem}

\subsection{Related Work}\label{s-existing2}

Subset variants of classic graph transversal problems are widely studied, also in the context of $H$-free graphs. Indeed, 
Brettell et al.~\cite{BJPP22} needed Theorem~\ref{t-sp1p4} as an auxiliary result in complexity studies for {\sc Subset Feedback Vertex Set} and {\sc Subset Odd Cycle Transversal} restricted to $H$-free graphs. The first problem is to decide for a graph $G=(V,E)$, subset $T\subseteq V$ and integer $k$, if $G$ has a set $S$ of size at most $k$ such that $S$ contains a vertex of every cycle that intersects $T$. The second problem is similar but replaces ``cycle'' by ``cycle of odd length''. Brettell et al.~\cite{BJPP22} proved that both these subset transversal problems are polynomial-time solvable on $(sP_1+P_3)$-free graphs for every $s\geq 0$.
They also showed that {\sc Odd Cycle Transversal} is polynomial-time solvable for $P_4$-free graphs and \NP-complete for split graphs, which form a subclass of $2P_2$-free graphs, whereas \NP-completeness for {\sc Subset Feedback Vertex Set} on split graphs was shown by Fomin et al.~\cite{FHKPV14}. Recently, Paesani et al.~\cite{PPR22b} extended the result of~\cite{BJPP22} for {\sc Subset Feedback Vertex Set} from $(sP_1+P_3)$-free graphs to $(sP_1+P_4)$-free graphs for every integer $s\geq 0$. If $H$ contains a cycle or claw, \NP-completeness for both subset transversal problems follows from corresponding results for {\sc Feedback Vertex Set}~\cite{Mu17b,Po74} and {\sc Odd Cycle Transversal}~\cite{CHJMP18}. 

Combining all the above results leads to 
a dichotomy for {\sc Subset Feedback Vertex Set} and a partial classification for {\sc Subset Odd Cycle Transversal} 
 (see also~\cite{BJPP22,PPR22b}). 
 Here, we write $F\ssi G$ if $F$ is an induced subgraph of $G$.

\begin{theorem}\label{t-dicho2}
For a graph~$H$, {\sc Subset Feedback Vertex Set} on $H$-free graphs is polynomial-time solvable if 
$H\ssi sP_1+P_4$ for some $s\geq 0$, and \NP-complete otherwise.
\end{theorem}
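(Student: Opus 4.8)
The plan is to assemble the dichotomy from the structural results quoted above together with the known tractable and hard cases for {\sc Subset Feedback Vertex Set}. Membership in \NP{} is immediate: for a candidate set $S$ one can check in polynomial time whether $G-S$ still has a cycle meeting $T\setminus S$, and hence whether $S$ is a valid solution. So it suffices to establish the polynomial half, the \NP-complete half, and to verify that the two halves are complementary.

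For the tractable half, suppose $H\ssi sP_1+P_4$ for some $s\ge 0$. Then every $H$-free graph is $(sP_1+P_4)$-free, since a graph containing an induced $sP_1+P_4$ would also contain $H$. Thus the class of $H$-free graphs is a subclass of the $(sP_1+P_4)$-free graphs, on which {\sc Subset Feedback Vertex Set} is polynomial-time solvable by the result of Paesani et al.~\cite{PPR22b}. This settles all cases with $H\ssi sP_1+P_4$.

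For the hard half, assume $H\not\ssi sP_1+P_4$ for every $s\ge 0$, and split into two cases. First, if $H$ is not a linear forest, then $H$ contains a cycle, or $H$ is a forest with a vertex of degree at least~$3$ whose three neighbours are pairwise non-adjacent and hence induce a claw. In both situations {\sc Feedback Vertex Set}, and therefore {\sc Subset Feedback Vertex Set} (which generalizes it via $T=V$), is \NP-complete on $H$-free graphs by the results on graphs of large girth and on claw-free graphs in~\cite{Mu17b,Po74}. Second, if $H$ is a linear forest with $H\not\ssi sP_1+P_4$, I claim $H$ contains an induced $2P_2$: a $2P_2$-free linear forest has at most one non-trivial component (two edges from distinct components already induce $2P_2$), and a path $P_j$ is $2P_2$-free only for $j\le 4$, so a $2P_2$-free linear forest has the form $P_j+tP_1$ with $j\le 4$, which is exactly an induced subgraph of $sP_1+P_4$. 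Since $2P_2\ssi H$, the split graphs---which are $2P_2$-free---form a subclass of the $H$-free graphs, and {\sc Subset Feedback Vertex Set} is \NP-complete on split graphs by Fomin et al.~\cite{FHKPV14}.

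Finally, the two halves partition all graphs $H$: the tractable condition is $H\ssi sP_1+P_4$, and the case analysis above handles every $H$ failing it, whether $H$ is not a linear forest or a linear forest containing $2P_2$. The step needing genuine care is the last combinatorial claim, namely the clean equivalence for linear forests between being $2P_2$-free and being an induced subgraph of some $sP_1+P_4$; once this is in place, every remaining case reduces to quoting the tractability result of~\cite{PPR22b} or one of the hardness results of~\cite{FHKPV14,Mu17b,Po74} on an appropriate subclass of the $H$-free graphs.
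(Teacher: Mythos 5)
Your proposal is correct and follows exactly the route the paper intends: Theorem~\ref{t-dicho2} is stated there as a combination of the known results it cites (polynomiality on $(sP_1+P_4)$-free graphs from~\cite{PPR22b}, hardness from {\sc Feedback Vertex Set} on $H$-free graphs when $H$ has a cycle or an induced claw~\cite{Mu17b,Po74}, and hardness on split graphs~\cite{FHKPV14}). Your combinatorial glue --- that a linear forest not contained in any $sP_1+P_4$ must contain an induced $2P_2$, so split graphs form a subclass of the $H$-free graphs --- is exactly the missing verification, and it is argued correctly.
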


\begin{theorem}\label{t-dicho3}
For a graph~$H\neq sP_1+P_4$ for some $s\geq 1$, {\sc Subset Odd Cycle Transversal} on $H$-free graphs is polynomial-time solvable if 
$H=P_4$ or $H\ssi sP_1+P_3$ for some $s\geq 0$, and \NP-complete otherwise.
\end{theorem}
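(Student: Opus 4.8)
The plan is to obtain this classification by assembling the polynomial-time and \NP-completeness results collected above, together with a short structural case analysis on $H$. For the polynomial cases, suppose first that $H\ssi sP_1+P_3$ for some $s\ge 0$. Then every $H$-free graph is also $(sP_1+P_3)$-free (a graph containing $sP_1+P_3$ would contain $H$), so the class of $H$-free graphs is contained in the class of $(sP_1+P_3)$-free graphs, and polynomial-time solvability is inherited from the algorithm of Brettell et al.~\cite{BJPP22}. The remaining positive case $H=P_4$ is treated separately, via the polynomial-time algorithm for {\sc Subset Odd Cycle Transversal} on $P_4$-free graphs that mirrors the algorithm for {\sc Odd Cycle Transversal} on this class; this case is genuinely not subsumed by the previous one, since $P_4\not\ssi sP_1+P_3$ for any~$s$.

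For the hardness direction, the goal is to show that every $H$ lying outside the stated polynomial and excluded cases contains an induced cycle, an induced claw, or an induced $2P_2$, and that each of these three features already forces \NP-completeness. If $H$ contains a cycle or a claw, then {\sc Odd Cycle Transversal}, and hence the more general {\sc Subset Odd Cycle Transversal}, is \NP-complete on $H$-free graphs by~\cite{CHJMP18}. If $2P_2\ssi H$, then the class of split graphs is $2P_2$-free and therefore $H$-free, so \NP-completeness follows from the \NP-completeness of {\sc Odd Cycle Transversal} on split graphs.

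The combinatorial core is the case distinction itself. If $H$ contains no cycle then it is a forest, and if it additionally contains no claw then it has maximum degree at most~$2$ (a degree-$3$ vertex in a forest yields an induced claw), so $H$ is a linear forest. If on top of this $2P_2\not\ssi H$, then $H$ has at most one component containing an edge, and that component is a path on at most four vertices (since $P_5\si 2P_2$); hence $H=sP_1+P_k$ with $k\le 4$, so $H\ssi sP_1+P_4$. Such an $H$ is either a polynomial case (namely $H\ssi sP_1+P_3$ when $k\le 3$, or $H=P_4$ when $k=4$ with no isolated vertices) or an excluded case ($H=sP_1+P_4$ with $s\ge 1$). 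Contrapositively, any $H$ that is neither polynomial nor excluded must contain a cycle, a claw, or a $2P_2$, which closes the hardness direction.

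The main obstacle, and the reason Theorem~\ref{t-dicho3} is only a partial classification, is precisely the family $H=sP_1+P_4$ with $s\ge 1$. These graphs are linear forests avoiding cycles, claws, and $2P_2$, so none of the hardness constructions apply; yet they strictly extend the single tractable case $H=P_4$, whose only known algorithm exploits the cograph structure of $P_4$-free graphs. Settling these instances is left open and would require either a genuinely new algorithm using $(sP_1+P_4)$-freeness or a new reduction.
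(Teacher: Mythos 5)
Your proposal is correct and matches the paper's (implicit) derivation: the paper states Theorem~\ref{t-dicho3} as a direct combination of the cited results of Brettell et al.~\cite{BJPP22} (polynomial cases for $(sP_1+P_3)$-free and $P_4$-free graphs, hardness on split graphs) and the hardness for $H$ containing a cycle or claw~\cite{CHJMP18}, without writing out the case analysis. Your structural argument reducing the remaining cases to linear forests contained in $sP_1+P_4$ is exactly the template the paper itself uses when proving the analogous Theorem~\ref{t-dicho} in Section~\ref{s-dicho}.
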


\noindent
We note that neither {\sc Subset Feedback Vertex Set} nor {\sc Subset Odd Cycle Transversal} restricted to some graph class~${\cal G}$ can be reduced to their classical counterparts on the probe graph class~${\cal G}_p$ by removing edges between any pair of vertices that both do not belong to $T$. So, in this sense these two subset transversal problems are different in nature than {\sc Subset Vertex Cover}.

\subsection{Our Results}

In Section~\ref{s-hard} we prove two new hardness results, using the same basis reduction, which may have a wider applicability. We first answer Q1 by proving that {\sc Subset Vertex Cover} is \NP-complete even for subcubic planar line graphs of triangle-free graphs, or equivalently, subcubic planar $(\mbox{claw},\mbox{diamond})$-free graphs~\cite{MT03}. We then answer Q2 by proving that {\sc Subset Vertex Cover} is \NP-complete even for $2$-unipolar graphs, which are $2P_3$-free (and thus $P_7$-free).

Our hardness results show a sharp contrast with {\sc Vertex Cover}, which can be solved in polynomial time for both weakly chordal graphs~\cite{GLS84} and $rK_{1,3}$-free graphs for every $r\geq 1$~\cite{BM18}. Hence, {\sc Subset Vertex Cover} may be harder than {\sc Vertex Cover} for a graph class closed under vertex deletion (if $\sP\neq \NP$). 
This is in contrast to graph classes closed under edge deletion (see Proposition~\ref{p-easy}).

In Section~\ref{s-hard} we also prove that {\sc Subset Vertex Cover} is \NP-complete for inputs $(G,T,k)$ if the subgraph $G[T]$ of $G$ induced by $T$ is $P_3$-free. On the other hand, our first positive result, shown in Section~\ref{s-poly}, shows that the problem is polynomial-time solvable if $G[T]$ is $sP_2$-free for any $s\geq 2$.
In Section~\ref{s-poly} we also prove that 
{\sc Subset Vertex Cover} can be solved in polynomial time for $(sP_1+P_2+P_3)$-free graphs for every $s\geq 1$. Our positive results generalize known results for {\sc Vertex Cover}. 
Recall that 
{\sc Subset Vertex Cover} is polynomial-time solvable for split graphs. Note that this also follows from our first result and contrasts our \NP-completeness result for $2$-unipolar graphs, which are generalized split, $2P_3$-free, and weakly chordal.

Combining our new results with Theorem~\ref{t-sp1p4} gives us a partial classification and a dichotomy, both of which are proven in Section~\ref{s-dicho}.

\begin{theorem}\label{t-dicho}
For a graph~$H\neq rP_1+sP_2+P_3$ for any $r\geq 0$, $s\geq 2$; $rP_1+sP_2+P_4$ for any $r\geq 0$, $s\geq 1$; or $rP_1+sP_2+P_t$ for any $r\geq 0$, $s\geq 0$, $t\in \{5,6\}$,
{\sc Subset Vertex Cover} on $H$-free graphs is polynomial-time solvable if 
$H\ssi sP_1+P_2+P_3$, $sP_2$, or $sP_1+P_4$ for some $s\geq 1$, and \NP-complete otherwise.
\end{theorem}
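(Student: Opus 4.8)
The plan is to assemble the classification from the hardness and polynomial-time results of Sections~\ref{s-hard} and~\ref{s-poly} together with Theorems~\ref{t-hard} and~\ref{t-sp1p4}, repeatedly using the elementary fact that $F\ssi H$ implies that every $F$-free graph is $H$-free, so that the class of $F$-free graphs is contained in the class of $H$-free graphs. Consequently, \NP-completeness of \svc on an $F$-free class transfers to the $H$-free class whenever $F\ssi H$, and polynomial-time solvability on an $H$-free class transfers to every $F$-free subclass with $F\ssi H$. First I would dispose of all $H$ that are not linear forests. If $H$ contains a cycle, then \svc is \NP-complete on $H$-free graphs by Theorem~\ref{t-hard}. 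Otherwise $H$ is a forest, and since it is not a linear forest it has a vertex $v$ of degree at least~$3$; as a forest is triangle-free, three neighbours of $v$ are pairwise non-adjacent and induce a claw, so $K_{1,3}\ssi H$. Because \svc is \NP-complete on claw-free graphs (Section~\ref{s-hard}) and every claw-free graph is $H$-free when $K_{1,3}\ssi H$, \NP-completeness transfers to $H$-free graphs. Hence it remains to treat linear forests, which I write as $H=aP_1+bP_2+\sum_{i}P_{k_i}$ with each $k_i\geq 3$.

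For the polynomial-time direction, suppose $H\ssi sP_1+P_4$, $H\ssi sP_1+P_2+P_3$, or $H\ssi sP_2$ for some $s$. If $H\ssi sP_1+P_4$, then every $H$-free graph is $(sP_1+P_4)$-free and we are done by Theorem~\ref{t-sp1p4}. If $H\ssi sP_1+P_2+P_3$, then every $H$-free graph is $(sP_1+P_2+P_3)$-free and we apply the algorithm for $(sP_1+P_2+P_3)$-free graphs from Section~\ref{s-poly}. If $H\ssi sP_2$, then every $H$-free graph $G$ is $sP_2$-free, so in particular the induced subgraph $G[T]$ is $sP_2$-free, and we apply the algorithm for instances with $G[T]$ being $sP_2$-free from Section~\ref{s-poly} (the remaining case, where $G$ is $P_2$-free and hence edgeless, is trivial).

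For the hardness direction, let $H$ be a linear forest that is none of the excluded graphs and is not an induced subgraph of $sP_1+P_4$, $sP_1+P_2+P_3$, or $sP_2$. I claim $2P_3\ssi H$; granting this, \NP-completeness follows because \svc is \NP-complete on $2P_3$-free graphs (Section~\ref{s-hard}) and every $2P_3$-free graph is $H$-free when $2P_3\ssi H$. To prove the claim I would split on the number of components of $H$ of order at least~$3$. If there are at least two such components, then $2P_3\ssi H$ at once. If there are none, then $H=aP_1+bP_2\ssi(a+b)P_2$, contradicting the hypothesis. If there is exactly one, say of order $k\geq 3$, so $H=aP_1+bP_2+P_k$, I argue on $k$: for $k=3$ with $b=0$ one has $H\ssi sP_1+P_4$, for $k=3$ with $b=1$ one has $H\ssi sP_1+P_2+P_3$, and for $k=4$ with $b=0$ one has $H\ssi sP_1+P_4$, each falling into a polynomial family and so excluded; the remaining small cases, namely $k=3$ with $b\geq 2$, $k=4$ with $b\geq 1$, and $k\in\{5,6\}$, are exactly the excluded families $rP_1+sP_2+P_3$ $(s\geq 2)$, $rP_1+sP_2+P_4$ $(s\geq 1)$, and $rP_1+sP_2+P_t$ $(t\in\{5,6\})$; and finally $k\geq 7$ gives $2P_3\ssi P_7\ssi P_k\ssi H$. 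Since every outcome except $k\geq 7$, and the two-component case, is excluded by hypothesis, we conclude $2P_3\ssi H$.

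The hard part is not any single argument but the bookkeeping in the previous paragraph: one must verify that the split on the number and orders of the components of order at least~$3$ partitions the linear forests \emph{exactly} into the three polynomial families, the three listed undetermined families, and the graphs containing $2P_3$, with no overlaps or omissions. In particular I would check the containment $2P_3\ssi P_7$ and the non-containments $2P_3\not\ssi P_5$ and $2P_3\not\ssi P_6$, which are what make $P_5$ and $P_6$ genuinely undetermined rather than hard, and confirm that the parametrisation $aP_1+bP_2+P_k$ of the one-large-component case lines up precisely with the $r,s,t$ parametrisations of the excluded families, so that the only surviving linear forests are those with $2P_3\ssi H$.
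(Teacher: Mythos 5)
Your proposal is correct and follows essentially the same route as the paper's proof: cycles are handled by Theorem~\ref{t-hard}, degree-$3$ vertices via the claw and Theorem~\ref{t-hard1}, linear forests containing $2P_3$ via Theorem~\ref{t-hard2}, and the remaining linear forests fall into the three polynomial families handled by Theorems~\ref{t-sp1p2p3}, \ref{t-sp2} and~\ref{t-sp1p4}. The only difference is that you explicitly verify the bookkeeping claim that a linear forest avoiding $2P_3$ and the excluded families must lie in one of the polynomial families, a step the paper asserts without proof; your verification is correct.
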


\begin{theorem}\label{t-dicho4}
For a graph $H$, {\sc Subset Vertex Cover} on instances $(G,T,k)$, where $G[T]$ is $H$-free, is polynomial-time solvable if $H\ssi sP_2$ for some $s\geq 1$, and \NP-complete otherwise.
\end{theorem}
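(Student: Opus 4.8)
The plan is to prove the dichotomy by reducing both directions to results established elsewhere in the paper, namely the polynomial-time algorithm for instances in which $G[T]$ is $sP_2$-free (Section~\ref{s-poly}) and the \NP-completeness for instances in which $G[T]$ is $P_3$-free (Section~\ref{s-hard}), together with the classical \NP-completeness of \vc{} on triangle-free graphs~\cite{Po74}. The first thing I would record is a clean structural characterization of the boundary: a graph $H$ satisfies $H\ssi sP_2$ for some $s\geq 1$ if and only if $H$ has maximum degree at most~$1$, i.e.\ $H=aP_1+bP_2$ for some $a,b\geq 0$. Indeed, every induced subgraph of $sP_2$ has maximum degree at most~$1$, while conversely $aP_1+bP_2\ssi (a+b)P_2$. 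Consequently, $H\not\ssi sP_2$ for every $s$ exactly when $H$ has a vertex of degree at least~$2$, and such a vertex together with two of its neighbours induces either a $P_3$ or a $C_3$; hence in the ``otherwise'' case we always have $P_3\ssi H$ or $C_3\ssi H$.

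For the polynomial-time direction, suppose $H\ssi sP_2$ for some $s\geq 1$ and set $s'=\max\{s,2\}$, so that $H\ssi sP_2\ssi s'P_2$ with $s'\geq 2$. I would then observe that every instance $(G,T,k)$ in which $G[T]$ is $H$-free in fact has $G[T]$ being $s'P_2$-free: were $G[T]$ to contain $s'P_2$ as an induced subgraph, then by transitivity of $\ssi$ it would contain $H$, contradicting $H$-freeness. The claim follows at once by invoking the polynomial-time algorithm for instances with $s'P_2$-free $G[T]$ proven in Section~\ref{s-poly}.

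For the \NP-complete direction, suppose $H\not\ssi sP_2$ for every $s\geq 1$, so that $P_3\ssi H$ or $C_3\ssi H$ by the characterization above. If $P_3\ssi H$, then any instance with $G[T]$ being $P_3$-free also has $G[T]$ being $H$-free, since a copy of $H$ inside $G[T]$ would contain a $P_3$; thus the instances with $P_3$-free $G[T]$ form a subfamily of those with $H$-free $G[T]$, and the \NP-completeness established in Section~\ref{s-hard} for the former transfers directly to the latter. If instead $C_3\ssi H$, I would take an arbitrary triangle-free graph $G$ and view it as the \svc{} instance $(G,V,k)$ with $T=V$; then $G[T]=G$ is $C_3$-free and hence $H$-free, while \svc{} with $T=V$ is precisely \vc{}, which is \NP-complete on triangle-free graphs~\cite{Po74}. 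Membership in \NP{} being immediate, this settles both subcases.

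The routine parts are the two containment arguments and the transfer of hardness to a superfamily of instances; the genuine content lies entirely in the two auxiliary results cited above (the tractability for $sP_2$-free $G[T]$ and the hardness for $P_3$-free $G[T]$), which are the real obstacles and are proven in Sections~\ref{s-poly} and~\ref{s-hard}. The only subtlety I would verify carefully is that the two hardness subcases are jointly exhaustive on the complement of the tractable case: a graph that is simultaneously $P_3$-free and $C_3$-free has maximum degree at most~$1$ and therefore satisfies $H\ssi sP_2$, so the split into ``$P_3\ssi H$'' and ``$C_3\ssi H$'' covers exactly the graphs $H$ with $H\not\ssi sP_2$.
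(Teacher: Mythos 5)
Your proof is correct and follows the same overall strategy as the paper's: tractability via the algorithm for instances with $sP_2$-free $G[T]$ (Theorem~\ref{t-sp2}), and hardness by transferring \NP-completeness from a family of instances whose $G[T]$ is already known to be hard and is contained in the family of $H$-free ones. The noteworthy difference is in the hardness direction. The paper's proof splits only into ``$P_3\ssi H$'' versus ``$H$ is $P_3$-free'' and asserts that the latter implies $H\ssi sP_2$; that implication is false as stated (e.g.\ $H=K_3$ is $P_3$-free but is not an induced subgraph of any $sP_2$), so the paper's argument silently skips the graphs $H$ that are disjoint unions of cliques with a component of size at least~$3$. You correctly identify that the complement of ``$H\ssi sP_2$ for some $s$'' is ``$P_3\ssi H$ or $C_3\ssi H$'' and you handle the second alternative separately, via \vc{} on triangle-free graphs with $T=V$~\cite{Po74}. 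That extra reduction is sound, but it is not actually needed: the hard instances of Theorem~\ref{t-hard2} have $G[T]$ equal to a disjoint union of edges, which is simultaneously $P_3$-free and $C_3$-free, hence $H$-free under either alternative, so a single appeal to Theorem~\ref{t-hard2} covers the entire ``otherwise'' case. In short, your write-up is the more careful of the two; the only streamlining available is to drop the triangle-free \vc{} detour in favour of reusing Theorem~\ref{t-hard2} for both hardness subcases.
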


\noindent
Theorems~\ref{t-dicho2}--\ref{t-dicho4} show that {\sc Subset Vertex Cover} on $H$-free graphs can be solved in polynomial time for infinitely more graphs $H$ than {\sc Subset Feedback Vertex Set} and {\sc Subset Odd Cycle Transversal}. This is in line with the behaviour of the corresponding original (non-subset) problems. 

In Section~\ref{s-mim} we discuss our final new result, which states that {\sc Subset Vertex Cover} is polynomial-time solvable on every graph class of bounded mim-width, such as the class of circular-arc graphs.
In Section~\ref{s-con} we discuss some directions for future work, which naturally originate from the above results.

\section{Preliminaries}\label{s-pre}

Let $G=(V,E)$ be a graph. The {\it degree} of a vertex $u\in V$ is the size of its {\it neighbourhood} $N(u)=\{v\; |\; uv\in E\}$. We say that $G$ is {\it subcubic} if every vertex of $G$ has degree at most~$3$. An independent set $I$ in $G$ is {\it maximal} if there exists no independent set $I'$ in $G$ with $I\subsetneq I'$. Similarly, a vertex cover $S$ of $G$ is {\it minimal} if there exists no vertex cover $S'$ in $G$ with $S'\subsetneq S$.
For a graph $H$ we write $H\ssi G$ if $H$ is an {\it induced} subgraph of $G$, that is, $G$ can be modified into $H$ by a sequence of vertex deletions. If $G$ does not contain $H$ as an induced subgraph, $G$ is {\it $H$-free}. For a set of graphs~${\cal H}$, $G$ is ${\cal H}$-free if $G$ is $H$-free for every $H\in {\cal H}$. If ${\cal H}=\{H_1,\ldots,H_p\}$ for some $p\geq 1$, we also write that $G$ is {\it $(H_1,\ldots,H_p)$-free}. 

The {\it line graph} of a graph $G=(V,E)$ is the graph~$L(G)$ that has vertex set $E$ and an edge between two vertices $e$ and $f$ if and only if $e$ and $f$ share a common end-vertex in $G$. The {\it complement} $\overline{G}$ of a graph $G=(V,E)$ has vertex set $V$ and an edge between two vertices $u$ and $v$ if and only if $uv\notin E$. 

For two vertex-disjoint graphs $F$ and $G$, the {\it disjoint union} $F+G$ is the graph $(V(F)\cup V(G), E(F)\cup E(G))$. We denote the disjoint union of $s$ copies of the same graph $G$ by $sG$.
A {\it linear forest} is a disjoint union of one or more paths.

Let $C_s$ be the cycle on $s$ vertices; $P_t$ the path on $t$ vertices; $K_r$ the complete graph on $r$ vertices; and $K_{1,r}$ the star on $(r+1)$ vertices. The graph~$C_3=K_3$ is the {\it triangle}; the graph $K_{1,3}$ the {\it claw}, and the graph $\overline{2P_1+P_2}$ is the {\it diamond} (so the diamond is obtained from the $K_4$ after deleting one edge).
The {\it subdivision} of an edge $uv$ replaces $uv$ with a new vertex $w$ and edges $uw$, $wv$.
A {\it subdivided claw} is obtained from the claw by subdividing each of its edges zero or more times.

A graph is {\it chordal} if it has no induced $C_s$ for any $s\geq 4$. A graph is
{\it weakly chordal} if it has no induced~$C_s$ and no induced~$\overline{C_s}$ for any $s\geq 5$.
A cycle $C_s$ or an anti-cycle $\overline{C_s}$ is {\it odd} if it has an odd number of vertices.
By the Strong Perfect Graph Theorem~\cite{CRST06}, a graph is {\it perfect} if it has no odd induced~$C_s$ and no odd induced $\overline{C_s}$ for any $s\geq 5$. Every chordal graph is weakly chordal, and every weakly chordal graph is perfect.
A graph $G=(V,E)$ is {\it unipolar} if $V$ can be partitioned into two sets~$V_1$ and~$V_2$, where~$G[V_1]$ is a complete graph and $G[V_2]$ is a disjoint union of complete graphs. If every connected component of $G[V_2]$ has size at most~$2$, then $G$ is {\it $2$-unipolar}.
Unipolar graphs form a subclass of {\it generalized split graphs}, which are the graphs that are unipolar or their complement is unipolar. 
It can also readily be checked that every $2$-unipolar graph is weakly chordal 
(but not necessarily chordal, as evidenced by $G=C_4$).
 
For an integer~$r$, a graph $G'$ is an {\it $r$-subdivision} of a graph $G$ if $G'$ can be obtained from $G$ by subdividing every edge of $G$ $r$ times, that is, by replacing each edge $uv\in E(G)$ with a path from $u$ to $v$ of length~$r+1$.
 
\section{NP-Hardness Results}\label{s-hard}

In this section we prove our hardness results for {\sc Subset Vertex Cover}, using the following notation.
Let $G$ be a graph with an independent set $I$. We say that we {\it augment} $G$ by adding a (possibly empty) set $F$ of edges between some pairs of vertices of $I$. We call the resulting graph an {\it $I$-augmentation} of $G$.

The following lemma forms the basis for our hardness gadgets.

\begin{lemma}\label{l-augment}
Every vertex cover of a graph $G=(V,E)$ with an independent set~$I$ is a $(V\setminus I)$-vertex cover of every $I$-augmentation of $G$, and vice versa.
\end{lemma}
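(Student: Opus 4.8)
The plan is to observe that the two covering conditions are in fact identical once we identify precisely which edges each of them constrains. Write an arbitrary $I$-augmentation of $G$ as $G'=(V,E\cup F)$, where $F$ is a (possibly empty) set of edges each of whose endpoints lies in $I$. I would begin by partitioning the edge set of $G'$ into the original edges $E$ and the added edges $F$, and then decide, for each part, whether its edges are incident to a vertex of $T:=V\setminus I$.

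The key step is this edge classification. Since $I$ is an independent set of $G$, no edge of $E$ has both endpoints in $I$; hence every edge of $E$ has at least one endpoint in $V\setminus I=T$, and is therefore incident to a vertex of $T$. On the other hand, every edge of $F$ has both its endpoints in $I$, so no edge of $F$ is incident to $T$. Consequently, the set of edges of $G'$ that are incident to a vertex of $T$ is exactly $E$, independently of the choice of $F$.

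With this identity in hand, both directions follow at once, and for every $I$-augmentation simultaneously. A set $S\subseteq V$ is a vertex cover of $G$ precisely when $S$ contains an end-vertex of every edge in $E$; by the previous paragraph this is exactly the condition that $S$ contains an end-vertex of every edge of $G'$ incident to a vertex of $T$, i.e.\ that $S$ is a $(V\setminus I)$-vertex cover of $G'$. Since the added edges $F$ play no role in either condition, the particular $I$-augmentation chosen is irrelevant, which yields the claim in both directions.

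I do not expect a genuine obstacle here: the entire content of the lemma is the single observation that augmenting \emph{within} the independent set $I$ only introduces edges internal to $I$, and such edges are invisible to any $T$-vertex cover with $T=V\setminus I$. The only point requiring care is to invoke the independence of $I$ explicitly, as this is exactly what guarantees that every \emph{original} edge touches $T$ and hence must still be covered; without it, an original edge could lie inside $I$ and the equivalence would break.
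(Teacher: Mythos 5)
Your proof is correct and rests on the same two observations as the paper's: independence of $I$ forces every original edge to touch $V\setminus I$, while every added edge lies inside $I$ and is thus irrelevant to a $(V\setminus I)$-vertex cover. You package this as a single identity between the two sets of edges to be covered, whereas the paper argues each direction separately by contradiction, but the content is essentially identical (and yours is arguably the cleaner presentation).
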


\begin{proof}
Let $G'$ be an $I$-augmentation of $G$. Consider a vertex cover $S$ of $G$. For a contradiction, assume that $S$ is not a $(V\setminus I)$-vertex cover of $G'$.
Then $G'-S$ must contain an edge $uv$ with at least one of $u,v$ belonging to $V\setminus I$. As $G-S$ is an independent set, $uv$ belongs to $E(G')\setminus E(G)$ implying that both $u$ and $v$ belong to $I$, a contradiction.

Now consider a $(V\setminus I)$-vertex cover $S'$ of $G'$. For a contradiction, assume that $S'$ is not a vertex cover of $G$. Then $G-S'$ must contain an edge $uv$ (so $uv\in E$). As $G'$ is a supergraph of $G$, we find that $G'-S'$ also contains the edge $uv$. As $S'$ is a $(V\setminus I)$-vertex cover of $G'$, both $u$ and $v$ must belong to $I$. As $uv\in E$, this contradicts the fact that $I$ is an independent set.
\end{proof}

\noindent
To use Lemma~\ref{l-augment} we need one other lemma, which follows directly from an observation due to Poljak~\cite{Po74}. 

\begin{lemma}[\cite{Po74}]\label{l-po}
For an integer~$r$, a graph $G$ with $m$ edges has an independent set of size~$k$ if and only if the $2r$-subdivision of $G$ has an independent set of size $k+rm$.
\end{lemma}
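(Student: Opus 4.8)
The plan is to prove both implications by tracking, for each original edge $uv$, the contribution of its subdivision path $u\,w_{uv}\,w_{vu}\,v$ to an independent set. Throughout write $m=|E(G)|$ and let $G'$ denote the $2$-subdivision of $G$.

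For the forward direction, suppose $I$ is an independent set of size $k$ in $G$. I would keep all of $I$ (its vertices remain pairwise non-adjacent in $G'$, since subdividing only lengthens the paths between original vertices) and then add exactly one internal vertex per edge. For an edge $uv$, at most one endpoint lies in $I$ because $I$ is independent; I add the internal vertex adjacent to an endpoint \emph{not} in $I$ (for instance, if $u\in I$ then I add $w_{vu}$, whose only neighbours are $v\notin I$ and $w_{uv}$, which I do not add). Internal vertices of distinct edges are never adjacent, and I take only one per edge, so the resulting set is independent; its size is $|I|+m=k+m$.

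The reverse direction is the substantive one. Given an independent set $J$ in $G'$ with $|J|=k+m$, split it as $A=J\cap V(G)$ and $B=J\setminus A$, the chosen internal vertices. The two internal vertices $w_{uv},w_{vu}$ of any edge are adjacent, so $B$ contains at most one of them, giving $|B|\le m$ and hence $|A|\ge k$. The obstacle is that $A$ need not be independent in $G$: two endpoints $u,v$ of an edge are at distance three in $G'$, so $J$ may well contain both. The key is to account for such \emph{bad} edges. If both $u,v\in A$, then $w_{uv}$ is adjacent to $u\in J$ and $w_{vu}$ to $v\in J$, so this edge contributes \emph{nothing} to $B$. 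Letting $p$ be the number of bad edges, each contributes $0$ to $B$ while every other edge contributes at most $1$, so $|B|\le m-p$, whence $|A|\ge (k+m)-(m-p)=k+p$.

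Finally I would clean up $A$. The induced subgraph $G[A]$ has exactly $p$ edges, namely the bad edges, so it admits a vertex cover of size at most $p$; deleting such a cover from $A$ destroys every edge inside $A$ and leaves a genuine independent set $A'$ of $G$ with $|A'|\ge |A|-p\ge k$. Taking any $k$-element subset finishes the argument. I expect the only real subtlety to be this counting step, which must match the surplus $|A|-k\ge p$ against the cleanup cost of at most $p$ vertices; everything else is a direct translation between the four-vertex path gadget and its two endpoints.
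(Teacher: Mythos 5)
Your proof is correct and complete: the forward direction's choice of one internal vertex per edge avoiding the endpoint in $I$, and the reverse direction's charging argument (bad edges contribute nothing to $B$, so $|A|\ge k+p$, which pays for deleting a vertex cover of the $p$ edges inside $A$) are exactly the standard argument for Poljak's reduction. The paper cites this lemma from~\cite{Po74} without proof, so there is no in-paper argument to compare against; your write-up would serve as a valid self-contained proof.
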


\noindent
We are now ready to prove our first two hardness results. Recall that a graph is $(\mbox{claw},\mbox{diamond})$-free if and only if it is a line graph of a triangle-free graph. Hence, the result in particular implies \NP-hardness of {\sc Subset Vertex Cover} on line graphs. 
Recall also that we denote the claw and diamond by $K_{1,3}$ and $\overline{2P_1+P_2}$, respectively.

\begin{figure}[t]
\centering
\includegraphics[scale=0.8]{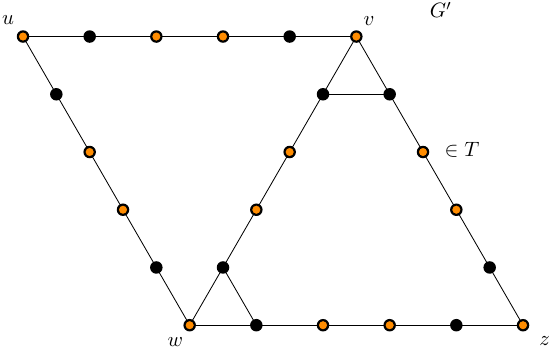}
\caption{The graph $G'$ from Theorem~\ref{t-hard1}, where $T = V\setminus W$ consists of the orange vertices.}\label{f-hard1}
\end{figure}

\begin{theorem}\label{t-hard1}
{\sc Subset Vertex Cover} is \NP-complete on $(K_{1,3},\overline{2P_1+P_2})$-free subcubic planar graphs.
\end{theorem}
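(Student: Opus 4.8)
The plan is to reduce from \vc{} (equivalently \textsc{Independent Set}) on cubic planar graphs, which is a classical \NP-complete problem, and to route the construction through Poljak's lemma so as to land inside the line-graph world. First I would take a cubic planar instance $G_0$ with $m$ edges and pass to its $2$-subdivision $\tilde{G}_0$. This graph is triangle-free (every edge is subdivided, killing all short cycles), subcubic (the original vertices keep degree~$3$ and the subdivision vertices have degree~$2$), and planar. By Lemma~\ref{l-po}, independent sets of $G_0$ of size~$k$ correspond exactly to independent sets of $\tilde{G}_0$ of size $k+m$, so minimum vertex covers transfer with a fixed additive shift; hence \vc{} is \NP-complete already on $\tilde{G}_0$, and any target bound~$k$ can be computed from the size of $\tilde{G}_0$ and the \textsc{Independent Set} threshold for $G_0$.

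Next I would realize the required graph class. Setting $G'=L(\tilde{G}_0)$, the fact that $\tilde{G}_0$ is triangle-free immediately gives that $G'$ is $(K_{1,3},\overline{2P_1+P_2})$-free, since these are exactly the line graphs of triangle-free graphs. Subcubicity of $G'$ follows because the original vertices of $\tilde{G}_0$ are pairwise non-adjacent, so no edge of $\tilde{G}_0$ joins two degree-$3$ vertices; thus every edge $e=xy$ satisfies $\deg(x)+\deg(y)\le 5$ and has at most $(\deg(x)-1)+(\deg(y)-1)\le 3$ neighbours in $G'$. Planarity of $G'$ is inherited because the line graph of a planar graph of maximum degree at most~$3$ is planar. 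In $G'$ each original vertex $u$ of $\tilde{G}_0$ becomes a triangle on its three incident ``leg'' edges, and each subdivided edge contributes a degree-$2$ ``middle'' vertex joining two such triangles.

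The remaining and most delicate step is to express the subset instance so that Lemma~\ref{l-augment} converts a $T$-vertex cover of $G'$ back into a vertex cover of $\tilde{G}_0$. I would view $G'$ as an $I$-augmentation of the subdivided skeleton, choose $I$ to be the independent set on which the clique/triangle edges are added, set $T=V(G')\setminus I$, and then invoke Lemma~\ref{l-augment} to identify the $T$-vertex covers of $G'$ with the vertex covers of the base graph; combined with the Poljak shift this yields the equivalence and the value of~$k$. The main obstacle is that passing to the line graph is forced precisely by a degree-budget conflict: to make $\tilde{G}_0$ itself claw-free one would have to add an edge inside the neighbourhood of every degree-$3$ vertex, but those neighbours are subdivision vertices of spare degree only~$1$ that are shared along subdivided edges, and a counting argument shows one cannot do this within a single independent set while keeping all degrees at most~$3$. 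Replacing each vertex by a triangle via $L(\tilde{G}_0)$ resolves this, so the crux of the write-up will be to verify that the induced choice of $I$ is independent, that $T=V\setminus I$ together with the augmentation reproduces exactly $L(\tilde{G}_0)$, and that the correspondence of Lemma~\ref{l-augment} is size-preserving, after which membership in the class (the claw-, diamond-, subcubicity, and planarity checks above) and the Poljak count complete the \NP-completeness proof.
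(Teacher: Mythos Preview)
Your line-graph detour does land in the right class, but the reduction breaks at the step where you invoke Lemma~\ref{l-augment}. That lemma identifies $T$-vertex covers of $G'$ with vertex covers of the \emph{base graph}~$H$ of the augmentation, not with vertex covers of $\tilde{G}_0$. With your choice of $I$ (the ``leg'' vertices of $L(\tilde{G}_0)$, on which the triangle edges sit), the base graph $H=G'-\{\text{triangle edges}\}$ is just a disjoint union of copies of $P_3$, one per edge of $G_0$: each middle vertex $e_2$ is adjacent only to its two legs $e_1,e_3$, and those legs have no other non-triangle neighbours. Vertex cover on a union of $P_3$'s is trivial, so Lemma~\ref{l-augment} gives you nothing hard. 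The Poljak shift you quote relates $G_0$ to $\tilde{G}_0$, not to $H$; there is no size-preserving correspondence between vertex covers of $\tilde{G}_0$ and vertex covers of $H$ (they even live on vertex sets of different cardinalities, $n+2m$ versus $3m$).

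The paper sidesteps the degree-budget obstacle you correctly spotted by applying Lemma~\ref{l-po} \emph{twice}, working with the $4$-subdivision $G$ of a cubic planar graph. In a $4$-subdivision the set $W$ of subdivision vertices adjacent to an original vertex \emph{is} independent (the two ``outer'' subdivision vertices on an original edge are separated by two ``inner'' ones), and each $w\in W$ has degree~$2$ and is a neighbour of exactly one original vertex. One can therefore add, for every degree-$3$ original vertex, a single edge between two of its $W$-neighbours while staying subcubic and planar; a direct check shows the result is claw- and diamond-free. Lemma~\ref{l-augment} with $I=W$ then equates $T$-vertex covers of the augmented graph with vertex covers of $G$ itself, and \NP-hardness follows. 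So your counting argument was pointing at the right obstruction, but the remedy is one more round of subdivision rather than passing to the line graph.
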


\begin{proof}
We reduce from {\sc Vertex Cover}, which is \NP-complete even on cubic planar graphs~\cite{Mo01}.
As an $n$-vertex graph has a vertex cover of size at most $k$ if and only if it has an independent set of size at least $n-k$, we find that {\sc Vertex Cover} 
is \NP-complete even on subcubic planar graphs that are $4$-subdivisions due to 
an application of Lemma~\ref{l-po} with $r=2$ (note that subdividing an edge preserves both maximum degree and planarity).
So, let $(G,k)$ be an instance of {\sc Vertex Cover}, where $G=(V,E)$ is a subcubic planar graph that is a $4$-subdivision of some cubic planar graph~$G^*$, and $k$ is an integer. 

In $G$, we let $U=V(G^*)$ and $W$ be the subset of $V(G)\setminus U$ that consists of all neighbours of vertices of $U$. Note that $W$ is an independent set in $G$. We construct a $W$-augmentation $G'$ as follows; see also Figure~\ref{f-hard1}. For every vertex $u\in U$ of degree~$3$ in $G$, we pick two arbitrary neighbours of $u$ (which both belong to $W$) and add an edge between them. It is readily seen that $G'$ is $(K_{1,3},\overline{2P_1+P_2})$-free, planar and subcubic. 
By Lemma~\ref{l-augment}, it holds that $G$ has a vertex cover of size at most $k$ if and only if $G'$ has a $(V\setminus W)$-vertex cover of size at most $k$.
\end{proof}

\begin{theorem}\label{t-hard2}
{\sc Subset Vertex Cover} is \NP-complete for instances $(G,T,k)$, for which $G$ is $2$-unipolar and $G[T]$ is a disjoint union of edges.
\end{theorem}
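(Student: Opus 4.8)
**

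The plan is to construct a reduction from \vc{} on a restricted class of graphs, mirroring the strategy already used in Theorem~\ref{t-hard1} but now engineering the augmented graph to be $2$-unipolar rather than line-graph-like. The key tool is again Lemma~\ref{l-augment}, which lets us transfer a vertex cover of some graph $G$ to a $(V\setminus I)$-vertex cover of an $I$-augmentation, provided $I$ is an independent set. So I would again start from a graph $G$ that is a $2$-subdivision (or higher subdivision) of an arbitrary input graph $G^*$, guaranteeing via Lemma~\ref{l-po} that \vc{} remains \NP-complete on such subdivided graphs. The subdivision vertices naturally supply a large independent set $I$ whose members, after augmentation, will become the disjoint edges forming $G[T]$, so that $T=V\setminus I$ in the notation of Lemma~\ref{l-augment} plays the role of the ``original'' vertices.

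The central difficulty, and where this proof departs from Theorem~\ref{t-hard1}, is forcing the resulting augmentation $G'$ to be $2$-unipolar: I need a partition $V(G')=V_1\cup V_2$ with $G'[V_1]$ a clique and $G'[V_2]$ a disjoint union of edges and isolated vertices. The natural candidate is to let $V_2$ consist of the augmented edges (the pairs of subdivision vertices joined during augmentation, which is exactly the $P_2$-components demanded by the theorem's ``$G[T]$ is a disjoint union of edges''), and to let $V_1$ be everything else, which I must then turn into a clique by adding edges. The obstacle is that adding clique edges inside $V_1$ changes the graph: I must ensure these added edges do not spoil the correspondence between covers. The way to handle this is to place only the original vertices $U=V(G^*)$ into $V_1$ and make them pairwise adjacent, noting that a minimal vertex cover of a subdivided graph can be taken to include all of $U$; hence forcing all of $V_1$ into the solution (via the clique) costs at most $|U|$ and can be folded into the budget $k$. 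I would set $T$ to be the subdivision vertices comprising $V_2$ so that the edges we must cover are exactly the augmented $P_2$'s, and argue the equivalence by a careful bookkeeping of the budget.

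Concretely, the steps I would carry out are: (1) fix the input \vc{} instance $(G^*,k^*)$ and form its $2$-subdivision $G$, recording the independent set $I$ of subdivision vertices; (2) augment $G$ by adding, for each original edge, one edge between the two subdivision vertices inserted on it, so that these form disjoint $P_2$'s; (3) add all edges among $U$ to make $U$ a clique, and verify the two parts $V_1=U$ and $V_2=I$ witness $2$-unipolarity while $G'[T]$ with $T=I$ is a disjoint union of edges; (4) invoke Lemma~\ref{l-augment} together with Lemma~\ref{l-po} and the clique-forcing argument to translate solution sizes, setting the target size to $k=k^*+m+|U|$ or the analogous value after accounting for the clique; and (5) confirm membership in \NP{} and that the reduction is polynomial. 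The main obstacle I expect is step~(3)--(4): reconciling the extra clique edges inside $V_1$ with Lemma~\ref{l-augment}, which as stated concerns augmentations adding edges only inside an independent set, not inside $T$. I anticipate needing a short separate argument (or a modified lemma) showing that forcing a clique on the ``must-cover'' side merely shifts the budget by a fixed amount, so that the reduction's correctness survives; getting this budget accounting exactly right, and checking that no unintended induced $C_5$, $\overline{C_5}$, or longer hole appears to break weak chordality, is the delicate part.
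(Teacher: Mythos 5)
Your overall construction is the paper's: take a $2$-subdivision $G$ of $G^*$, turn the original vertices $U=V(G^*)$ into a clique, and set $T$ to be the subdivision vertices. But your correctness argument has a genuine gap, and it sits exactly where you flag ``the delicate part''. The observation you are missing is that in the $2$-subdivision the set $U$ of \emph{original} vertices is an independent set (every edge of $G^*$ has been subdivided), so making $U$ a clique is precisely a $U$-augmentation in the sense of Lemma~\ref{l-augment}, with $I=U$ and hence $V\setminus I = W = T$. The lemma then applies verbatim: vertex covers of $G$ and $T$-vertex covers of $G'$ coincide, with \emph{no} change in the budget. The clique edges inside $U$ never need to be covered because they are not incident to $T$, so there is no ``clique-forcing'', and your proposed target $k=k^*+m+|U|$ is wrong --- the correct reduction keeps $k$ unchanged once Lemma~\ref{l-po} has been used to assume the \vc{} instance is a $2$-subdivision. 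The claim that a vertex cover of the subdivided graph ``can be taken to include all of $U$'' is both unnecessary and not something that would justify adding $|U|$ to the budget.

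Two smaller errors in the same vein: (i) the subdivision vertices of a $2$-subdivision are \emph{not} an independent set --- each edge $uv$ of $G^*$ becomes a path $uw_{uv}w_{vu}v$, so each pair $w_{uv},w_{vu}$ is already adjacent; your step (2), which adds these edges, is vacuous, and your identification of $I$ with the subdivision vertices is inconsistent with your later (correct) choice $T=I$, since Lemma~\ref{l-augment} would then yield a $(V\setminus I)$-vertex cover, i.e.\ $T=U$. (ii) You do not need to check weak chordality or hunt for induced $C_5$'s or $\overline{C_5}$'s: the theorem only asks for $G'$ to be $2$-unipolar, which is immediate from the partition into the clique $U$ and the disjoint edges on $W$, and $2$-unipolarity already implies weak chordality as noted in the paper's preliminaries.
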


\begin{proof}
We reduce from {\sc Vertex Cover}. 
So, let $(G,k)$ be an instance of {\sc Vertex Cover}, where $G=(V,E)$ is a graph and $k$ is an integer. 
By Lemma~\ref{l-po} (take $r=1$) we may assume that $G$ is a $2$-subdivision of a graph~$G^*$. Note that $V(G^*)$ is an independent set in $G$. We construct a $V(G^*)$-augmentation $G'$ of $G$ by changing $V(G^*)$ into a clique; see also Figure~\ref{f-hard2}. It is readily seen that $G'$ is $2$-unipolar. We set $T:=V\setminus V(G^*)$, so $G[T]$ is a disjoint union of edges. By Lemma~\ref{l-augment}, it holds that $G$ has a vertex cover of size at most~$k$ if and only if $G'$ has a $T$-vertex cover of size at most $k$.
\end{proof}

\begin{remark}
	It can be readily checked that $2$-unipolar graphs are $(2C_3,C_5,C_6,C_3+P_3,2P_3,\overline{P_6},\overline{C_6})$-free graphs, and thus are $2P_3$-free and weakly chordal.
\end{remark}

\begin{figure}[t]
\centering
\includegraphics[scale=0.8]{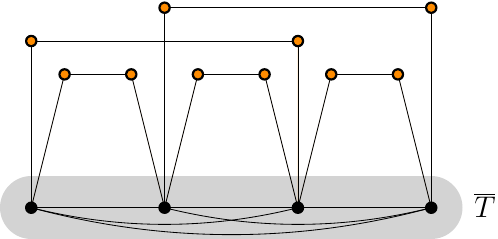}
\caption{The graph $G'$ from Theorem~\ref{t-hard2}, where the orange vertices form $T = V\setminus V(G^*)$.}\label{f-hard2}
\end{figure}

\section{Polynomial-Time Results}\label{s-poly}

In this section, we prove our polynomial-time results for instances $(G,T,k)$ where either $G$ is $H$-free or only $G[T]$ is $H$-free. The latter type of results are stronger, but only hold for graphs $H$ with smaller connected components.

For completeness, we state K\H{o}nig's theorem here, which has as an immediate consequence that \vc{} can be solved in polynomial time on bipartite graphs, which is constructively shown by Bondy and Murty~\cite{BondyMurtyGraphTheory}.
\begin{theorem}[K\H{o}nig's Theorem~\cite{Konig31}]\label{t-Konig}
	In a bipartite graph, the maximum number of edges in a matching is equal to the minimum number of vertices in a vertex cover.
\end{theorem}

We start with the case where $H=sP_2$ for some $s\geq 1$.
For this case we need the following two well-known results. The \emph{delay} of an enumeration algorithm
 is the maximum of the time taken before the first output and that between any pair of consecutive outputs. 
	
\begin{theorem}[\cite{BY89}]\label{t-by}
For every constant~$s\geq 1$, the number of maximal independent sets of an $sP_2$-free graph on $n$ vertices is at most $n^{2s}+1$.
\end{theorem}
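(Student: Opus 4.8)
The plan is to establish the stronger, purely combinatorial claim that an $sP_2$-free graph $G$ on $n$ vertices has $O(n^{2s})$ maximal independent sets, after which the precise bound $n^{2s}+1$ follows by routine bookkeeping. Throughout I would use the reformulation of $sP_2$-freeness as ``$G$ contains no induced matching with $s$ edges'', i.e.\ the induced (strong) matching number of $G$ is at most $s-1$. I would argue by induction on $s$. The base case $s=1$ is immediate: a $P_2$-free graph is edgeless and has exactly one maximal independent set, namely $V$, and $1\le n^2+1$. For the inductive step I would fix a \emph{maximum} induced matching $M$ of $G$; if $M$ is empty then $G$ is edgeless and we are done, so assume $M$ contains an edge $xy$.

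The structural lever driving the induction is the following observation, which I would prove first. For an edge $xy$ of a maximum induced matching, the graph $G-N[\{x,y\}]$ is $(s-1)P_2$-free: any induced matching with $s-1$ edges avoiding $N[x]\cup N[y]$ would, together with $xy$, form an induced matching with $s$ edges, contradicting $sP_2$-freeness. Thus deleting the two closed neighbourhoods of a single matching edge strictly decreases the parameter $s$, which is exactly what the induction needs.

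I would then group the maximal independent sets $I$ of $G$ according to their trace $\tau=I\cap N[\{x,y\}]$ on the window $W=N[\{x,y\}]$. Writing $\widehat G=G-W$, which is $(s-1)P_2$-free, one checks that once $\tau$ is fixed, the remainder $I\setminus W$ is a \emph{maximal} independent set of the induced subgraph $\widehat G-N_{\widehat G}(\tau)$: it is independent there, and any undominated vertex would have to be dominated through $W$, which the trace has already removed. Since this subgraph is an induced subgraph of $\widehat G$, hence $(s-1)P_2$-free, the induction hypothesis bounds the number of completions of each trace by $O(n^{2(s-1)})$. This gives a bound of the form (number of distinct traces)$\,\times\,O(n^{2(s-1)})$ on the total number of maximal independent sets, so it remains to bound the number of distinct traces by $O(n^2)$.

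The main obstacle is precisely this last charging step, and it is where the naive strategy fails. One \emph{cannot} decrease $s$ by guessing a single vertex of $I$ and deleting its closed neighbourhood: deleting $N[v]$ need not decrease the induced matching number at all. For example, in $G=3C_5$ the induced matching number equals $3$, yet for \emph{every} vertex $v$ the graph $G-N[v]$ still has induced matching number $3$, because removing one closed neighbourhood from a $C_5$ leaves a $P_2$. This is why the reduction must be driven by the whole edge $xy$ and the deletion of $N[\{x,y\}]$, and it is also why the exponent in the bound is $2s$ (two guessed vertices per level) rather than $s$. The delicate point is therefore to show that, although $W=N[\{x,y\}]$ may be large, the traces $I\cap W$ arising from maximal independent sets take only $O(n^2)$ distinct values; establishing this bounded, essentially injective ``certificate'' is the heart of the argument and is exactly what Balas and Yeh carry out.
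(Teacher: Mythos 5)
First, note that the paper does not prove this statement at all: Theorem~\ref{t-by} is imported verbatim from Balas and Yu~\cite{BY89}, so there is no in-paper proof to compare your argument against. Your attempt therefore has to stand on its own, and as written it does not: you candidly defer the decisive step --- that the traces $I\cap N[\{x,y\}]$ of maximal independent sets $I$ take only $O(n^2)$ distinct values --- to ``what Balas and Yu carry out''. That is not a minor bookkeeping lemma; it is the entire content of the theorem. To see this concretely, run your induction at $s=2$: there $\widehat{G}=G-N[\{x,y\}]$ is $P_2$-free, hence edgeless, so $\widehat{G}-N_{\widehat G}(\tau)$ has a \emph{unique} maximal independent set and $I$ is completely determined by its trace $\tau$. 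The number of maximal independent sets of $G$ then \emph{equals} the number of distinct traces, so bounding the traces by $O(n^2)$ is literally the $2P_2$-free case of the theorem (Farber's bound). Your induction thus makes no progress on the first nontrivial case, and nothing in the proposal explains why the window $W=N[\{x,y\}]$ --- which can contain almost all of $V$ and can induce a graph with exponentially many independent sets --- admits only quadratically many traces.

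The parts you do prove are fine: that $G-N[\{x,y\}]$ is $(s-1)P_2$-free for an edge $xy$ is correct, as is the observation that $I\setminus W$ is a maximal independent set of $\widehat{G}-N_{\widehat G}(\tau)$, and your $3C_5$ example correctly explains why deleting a single closed neighbourhood cannot drive the induction. But the missing idea is precisely how the factor $n^2$ per level arises. In the standard argument it does not come from counting subsets of $W$; it comes from charging each maximal independent set $I$ to an \emph{ordered pair of vertices of $I$ itself} (one in $I\cap N[x]$ and one in $I\cap N[y]$, both nonempty by maximality), after which $I$ is reconstructed as these two vertices together with a maximal independent set of the graph obtained by deleting their closed neighbourhoods --- and the work lies in showing that this residual graph has strictly smaller induced matching number. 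Replacing that vertex-pair certificate by ``count the distinct traces on $W$'' changes the problem into one that is at least as hard as the statement being proved, so the proposal, while structurally sensible, has a genuine and unfilled gap at its core.
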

	
\begin{theorem}[\cite{TAS77}]\label{t-tias}
For every constant~$s\geq 1$, it is possible to enumerate all maximal independent sets of an $sP_2$-free graph $G$ on $n$ vertices and $m$ edges with a delay of $O(nm)$.
\end{theorem}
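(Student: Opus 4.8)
The plan is to prove this via the classical incremental backtracking strategy of Tsukiyama et al., organised as a depth-first traversal of a suitable \emph{generation tree}. Fix an arbitrary ordering $v_1,\dots,v_n$ of $V$ and, for $0\le i\le n$, write $G_i=G[\{v_1,\dots,v_i\}]$, so that the maximal independent sets (MIS) of $G$ are exactly the MIS of $G_n$, while $G_0$ has the single MIS $\emptyset$. I would build a rooted tree whose nodes are pairs $(i,M)$ with $M$ a MIS of $G_i$, rooted at $(0,\emptyset)$, in which every non-root node $(i,M)$ has a uniquely determined parent of the form $(i-1,P)$ with $P$ a MIS of $G_{i-1}$. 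Once such a tree is in place, performing a DFS from the root and reporting a node whenever $i=n$ outputs every MIS of $G$ exactly once, since distinct MIS of $G$ are distinct leaves and the parent pointers guarantee each is reached along a single root-to-leaf path.

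The parent map is the heart of the construction. Given a MIS $M$ of $G_i$ with $i\ge 1$, I first distinguish whether $v_i\in M$. If $v_i\notin M$, then $M$ is already a MIS of $G_{i-1}$ (any vertex addable to $M$ in $G_{i-1}$ would also be addable in $G_i$, contradicting maximality), and I set its parent to be $(i-1,M)$. If $v_i\in M$, then $M\setminus\{v_i\}$ is independent but possibly non-maximal in $G_{i-1}$; I complete it \emph{canonically} --- repeatedly adding the lowest-indexed vertex of $\{v_1,\dots,v_{i-1}\}$ that can still be added --- to obtain a MIS $P$ of $G_{i-1}$, and set the parent to $(i-1,P)$. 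Conversely, to generate the children of a node $(i-1,P)$ I consider only two candidates at step $i$: the set $P$ itself (a MIS of $G_i$ precisely when $v_i$ has a neighbour in $P$, and otherwise $P\cup\{v_i\}$ is forced), and the set obtained by canonically completing $(P\setminus N(v_i))\cup\{v_i\}$. To prevent two different parents from claiming the same child, I accept the second candidate only when re-applying the parent map to it returns exactly $P$; this canonicity check is what makes the generation relation a genuine tree rather than a directed acyclic graph, and it is the step that most needs care to get right.

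The remaining work is the delay and space analysis. Each visit to a tree node requires computing neighbourhood intersections and one or two canonical greedy completions, each costing $O(n+m)$ time, and each node has at most two children, so the branching is bounded. Since the tree has depth at most $n$ and every MIS of $G_i$ is an independent set of $G_n$ that extends to a MIS of $G_n$, every internal node visited in the DFS has a leaf descendant reachable within $O(n)$ further node visits; hence at most $O(n)$ nodes are processed between two consecutive outputs, giving delay $O(nm)$. Polynomial space follows because the DFS stack stores only the current root-to-node path, each node described in $O(n)$ space. I expect the main obstacle to be verifying that the parent map is single-valued and that the canonicity test admits each MIS under exactly one parent --- that is, proving that the generation relation is precisely a tree in which all MIS of every $G_i$ appear --- rather than the routine timing bound.
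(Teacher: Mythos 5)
The paper does not prove this statement at all---it is imported as a black box from Tsukiyama, Ide, Ariyoshi and Shirakawa~\cite{TAS77}---and your sketch is a faithful reconstruction of exactly that algorithm in its reverse-search formulation: the generation tree over pairs $(i,M)$, the canonical parent map, the accept-only-if-parent-returns-$P$ test to kill duplicates, and the $O(n)$-nodes-between-consecutive-leaves delay analysis are all correct and are precisely the content of the cited result. One small nit: the cleanest justification that every internal node has a leaf descendant is your own observation that every $(i-1,P)$ always admits the first candidate child ($P$ itself, or $P\cup\{v_i\}$ when $v_i$ has no neighbour in $P$), rather than the appeal to maximal independent sets of $G_i$ extending to maximal independent sets of $G_n$.
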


\noindent
We now prove that {\sc Subset Vertex Cover} is polynomial-time solvable on instances $(G,T,k)$ where $G[T]$ is $sP_2$-free. The idea behind the algorithm is to remove any edges between vertices in $V\setminus T$, as these edges are irrelevant. As a consequence, we may leave the graph class, but this is not necessarily an obstacle. For example, if $G[T]$ is a complete graph, or $T$ is an independent set, we can easily solve the problem. Both cases are generalized by the result below.

\begin{theorem}\label{t-sp2}
For every $s\geq 1$, {\sc Subset Vertex Cover} can be solved in polynomial time on instances $(G,T,k)$ for which $G[T]$ is $sP_2$-free.
\end{theorem}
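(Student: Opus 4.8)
The plan is to reduce the problem to computing a minimum ordinary vertex cover of an auxiliary graph, and then to exploit the $sP_2$-freeness of $G[T]$ to reduce that, in turn, to polynomially many maximum-independent-set computations in \emph{bipartite} graphs. First I would follow the idea indicated before the statement and delete from $G$ every edge with both endpoints in $\overline{T}=V\setminus T$, obtaining a graph $G'$. The edges incident to $T$ are precisely the edges inside $T$ together with the edges between $T$ and $\overline{T}$, and these are exactly the edges of $G'$; hence a set $S$ is a $T$-vertex cover of $G$ if and only if $S$ is a vertex cover of $G'$. So it suffices to compute a maximum independent set $I$ of $G'$ and compare $|V|-|I|$ with $k$. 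Note that in $G'$ the set $\overline{T}$ is independent, while $G'[T]=G[T]$ remains $sP_2$-free.

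Next I would analyse the structure of a maximum independent set $I$ of $G'$ by splitting it as $A=I\cap T$ and $B=I\cap\overline{T}$. The set $A$ is independent in $G[T]$, so it extends to some \emph{maximal} independent set $M$ of $G[T]$ with $A\subseteq M$. The key point is that, by Theorem~\ref{t-by}, $G[T]$ has at most $|T|^{2s}+1$ maximal independent sets, and by Theorem~\ref{t-tias} these can all be listed in polynomial time. Thus I can afford to iterate over every maximal independent set $M$ of $G[T]$ and, for each, find the largest independent set of $G'$ contained in $M\cup\overline{T}$. Taking the best such set over all $M$ yields a maximum independent set of $G'$: the optimal $I$ is captured when the set $M\supseteq A$ is processed, whereas conversely every independent set inside $M\cup\overline{T}$ is also independent in $G'$.

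For a fixed $M$, the induced subgraph $G'[M\cup\overline{T}]$ is bipartite, since both $M$ (independent in $G[T]$) and $\overline{T}$ (independent in $G'$) are independent sets, so all of its edges run between $M$ and $\overline{T}$. Hence a maximum independent set of $G'[M\cup\overline{T}]$ can be computed in polynomial time via K\"onig's theorem, using a maximum-matching or max-flow computation. Running this for each of the polynomially many sets $M$ and keeping the best result produces a maximum independent set of $G'$ in polynomial time, which completes the algorithm.

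I expect the main obstacle to be the subproblem that remains after fixing $M$, namely deciding which vertices of $M$ to keep so as to leave as many vertices of $\overline{T}$ unblocked as possible; a naive search over subsets of $M$ is exponential. The crucial observation that removes this obstacle is that committing $I\cap T\subseteq M$ turns the residual instance into a bipartite maximum-independent-set problem, which is tractable. A secondary point requiring care is the correctness of restricting attention to maximal independent sets of $G[T]$: one must verify both that every independent subset of $T$ extends to such an $M$, so that the optimum is not missed, and that no set which fails to be independent in $G'$ is ever produced, the latter being immediate because $G'[M\cup\overline{T}]$ is an induced subgraph of $G'$.
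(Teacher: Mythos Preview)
Your proposal is correct and follows essentially the same approach as the paper, merely phrased in complementary terms: the paper enumerates the minimal vertex covers $R$ of $G[T]$ (equivalently, your maximal independent sets $M=T\setminus R$) and then computes a minimum vertex cover of the bipartite graph $G'[V\setminus R]=G'[M\cup\overline{T}]$, which is exactly your maximum-independent-set step dualised via K\"onig's theorem. The correctness justifications you give correspond directly to the paper's Claims~\ref{clm:1} and~\ref{clm:2}.
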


\begin{proof}
Let $s\geq 1$, and let $(G,T,k)$ be an instance of {\sc Subset Vertex Cover} where $G=(V,E)$ is a graph such that $G[T]$ is $sP_2$-free.
 Let $G'=(V,E')$ be the graph obtained from $G$ after removing every edge between two vertices of $V\setminus T$, so $G'[V\setminus T]$ is edgeless. We observe that $G$ has a $T$-vertex cover of size at most~$k$ if and only if $G'$ has a $T$-vertex cover of size at most~$k$. Moreover, $G'[T]$ is $sP_2$-free, and we can obtain $G'$ in $O(|E(G)|)$ time.
Hence, from now on, we consider the instance $(G',T,k)$.
 
We first prove the following two claims, see Figure~\ref{fig:sp2WR} for an illustration.

\begin{figure}[t]
	\centering
	\includegraphics[scale=0.9]{./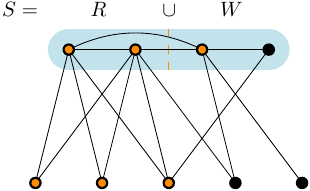}
	\caption{An example of the $2P_2$-free graph $G'$ of the proof of Theorem~\ref{t-sp2}. Here, $T$ consists of the orange vertices. A solution $S$ can be split up into a minimal vertex cover $R$ of $G'[T]$ and a vertex cover $W$ of $G[V\setminus R]$.}\label{fig:sp2WR}
\end{figure}

\begin{claim}\label{clm:1}
A subset $S\subseteq V(G')$ is a $T$-vertex cover of $G'$ if and only if $S=R\cup W$ for a minimal vertex cover $R$ of $G'[T]$ and a vertex cover $W$ of $G'[V\setminus R]$.
\end{claim}

\noindent
{\it Proof.}
We prove Claim~\ref{clm:1} as follows. Let $S\subseteq V(G')$. First assume that $S$ is a $T$-vertex cover of $G'$. Let $I=V\setminus S$. As $S$ is a $T$-vertex cover, $T\cap I$ is an independent set. Hence, $S$ contains a minimal vertex cover $R$ of $G'[T]$. As $G'[V\setminus T]$ is edgeless, $S$ is a vertex cover of $G'$, or in other words, $I$ is an independent set. In particular, this means that $S \setminus R$ is a vertex cover of $G'[V\setminus R]$.

Now assume that $S=R\cup W$ for a minimal vertex cover $R$ of $G'[T]$ and a vertex cover $W$ of $G'[V\setminus R]$. For a contradiction, suppose that $S$ is not a $T$-vertex cover of $G'$.
Then $G'-S$ contains an edge $uv\in E'$, where at least one of $u,v$ belongs to $T$. First suppose that both $u$ and $v$ belong to $T$. As $R$ is a vertex cover of $G'[T]$, at least one of $u$, $v$ belongs to $R\subseteq S$, a contradiction. Hence, exactly one of $u,v$ belongs to $T$, say $u\in T$ and $v\in V\setminus T$, so in particular, $v\notin R$. As $R\subseteq S$, we find that $u\notin R$. Hence, both $u$ and $v$ belong to $V\setminus R$. As $W$ is a vertex cover of $G'[V\setminus R]$, this means that at least one of $u,v$ belongs to $W\subseteq S$, a contradiction.
This proves the claim. \dia

\begin{claim}\label{clm:2}
For every minimal vertex cover $R$ of $G'[T]$, the graph $G'[V\setminus R]$ is bipartite.
\end{claim}

\noindent
{\it Proof.}
We prove Claim~\ref{clm:2} as follows. As $R$ is a vertex cover of $G'[T]$, we find that $T\setminus R$ is an independent set. As $G'[V\setminus T]$ is edgeless by construction of $G'$, this means that $G'[V\setminus R]$ is bipartite with partition classes $T\setminus R$ and $V\setminus T$. \dia

\medskip
\noindent
We are now ready to give our algorithm. We enumerate the minimal vertex covers of $G'[T]$. For every minimal vertex cover~$R$, we compute a minimum vertex cover $W$ of $G'[V\setminus R]$. In the end, we return the smallest $S=R\cup W$ that we found. 

The correctness of our algorithm follows from Claim~\ref{clm:1}. It remains to analyze the running time. As $G'[T]$ is $sP_2$-free, we can enumerate all maximal independent sets $I$ of $G'[T]$ and thus all minimal vertex covers $R=T\setminus I$ of $G'[T]$ in $(n^{2s}+1) \cdot O(nm)$ time due to Theorems~\ref{t-by} and~\ref{t-tias}. For a minimal vertex cover~$R$, the graph $G'[V\setminus R]$ is bipartite by Claim~\ref{clm:2}. Hence, we can compute a minimum vertex cover $W$ of $G'[V\setminus R]$ in polynomial time by applying K\H{o}nig's Theorem (Theorem~\ref{t-Konig}). We conclude that the total running time is polynomial.
\end{proof}

\noindent
For our next result (Theorem~\ref{t-sp1p2p3}) we need two known results as lemmas.

\begin{lemma}[\cite{BJPP22}]\label{lem:P1add}
If {\sc Subset Vertex Cover} is polynomial-time solvable on $H$-free graphs for some $H$, then it is so on $(H+P_1)$-free graphs.
\end{lemma}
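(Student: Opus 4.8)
The plan is to exploit a single clean structural consequence of $(H+P_1)$-freeness: for \emph{every} vertex $w$ of a $(H+P_1)$-free graph $G=(V,E)$, the graph $G[V\setminus N[w]]$ is $H$-free. Indeed, $w$ is non-adjacent to every vertex of $V\setminus N[w]$, so if $G[V\setminus N[w]]$ contained an induced $H$, then that copy together with $w$ would induce $H+P_1$ in $G$, a contradiction. This fact lets us isolate an $H$-free residual instance on which we may invoke the assumed polynomial-time algorithm. The key point is to choose $w$ to be a vertex of $T$ that is \emph{excluded} from the solution, since for such a vertex the non-solution side of the graph collapses neatly into $V\setminus N[w]$.

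Concretely, I would argue as follows. Fix a minimum $T$-vertex cover $S$ and write $I=V\setminus S$. Since no edge incident to $T$ may lie inside $I$, every vertex $w\in T\cap I$ satisfies $N(w)\cap I=\emptyset$; equivalently, $w\in T\setminus S$ forces $N(w)\subseteq S$ (every edge at $w$ is incident to $T$), and $I\setminus\{w\}\subseteq V\setminus N[w]$. This suggests the algorithm: for each candidate vertex $w\in T$, assume $w\notin S$, place the forced set $N(w)$ into the cover, and solve \svc{} on the induced subgraph $G[V\setminus N[w]]$ with the set $T\cap(V\setminus N[w])$ using the given algorithm for $H$-free graphs, which applies by the structural fact above. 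A $T$-vertex cover avoiding $w$ decomposes exactly as $N(w)$ together with a $\bigl(T\cap(V\setminus N[w])\bigr)$-vertex cover of $G[V\setminus N[w]]$: edges at $w$ and at $N(w)$ are covered by $N(w)\subseteq S$, and every remaining $T$-incident edge has both ends in $V\setminus N[w]$, so it is governed by the residual subproblem. Hence $\mathrm{opt}_w := |N(w)| + \mathrm{opt}\bigl(G[V\setminus N[w]]\bigr)$ is precisely the minimum size of a $T$-vertex cover that excludes $w$, and it is computable in polynomial time.

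The remaining case is when the optimum solution contains all of $T$; since $T$ is itself always a $T$-vertex cover, in that case the optimum is simply $S=T$ of size $|T|$. Therefore the overall answer is $\min\bigl(|T|,\ \min_{w\in T}\mathrm{opt}_w\bigr)$, compared against $k$; this requires only $|T|+1$ bounded computations, each either trivial or one call of the $H$-free algorithm, so the total running time is polynomial. The main obstacle, and the step deserving the most care, is precisely why the guessed vertex must lie in $T$: for a vertex $w\in V\setminus T$ that is excluded from $S$, its neighbours reached only through edges inside $V\setminus T$ are \emph{not} forced into $S$, so $I\setminus\{w\}$ need not be contained in $V\setminus N[w]$ and the decomposition breaks. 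Restricting the guess to $T$-vertices (together with the degenerate possibility $S=T$) is exactly what makes the case analysis exhaustive while keeping every residual instance an induced subgraph of $G$, and hence $H$-free.
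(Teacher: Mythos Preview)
Your proof is correct. The structural observation that $G[V\setminus N[w]]$ is $H$-free for any vertex $w$ of an $(H+P_1)$-free graph, the branching over a vertex $w\in T$ excluded from the solution (forcing $N(w)\subseteq S$), and the fallback case $S=T$ together give an exhaustive and polynomial procedure; your decomposition $\mathrm{opt}_w=|N(w)|+\mathrm{opt}\bigl(G[V\setminus N[w]],\,T\cap(V\setminus N[w])\bigr)$ is verified in both directions. One tiny wording point: in your last sentence, ``and hence $H$-free'' is not a consequence of being an induced subgraph per se, but of the specific choice $V\setminus N[w]$; the logic earlier makes this clear, so this is purely cosmetic.

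As for comparison: the paper does not give its own proof of this lemma but cites it from~\cite{BJPP22}. The argument you wrote is exactly the standard ``add a $P_1$'' trick used there (branch on a vertex outside the solution so that its non-neighbours induce an $H$-free graph), specialised to \textsc{Subset Vertex Cover} by restricting the guessed vertex to $T$ and handling the degenerate case $T\subseteq S$ separately. So your approach coincides with the intended one.
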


\begin{lemma}[\cite{BM18}]\label{l-lk13}
For every $r\geq 1$, {\sc Vertex Cover} is polynomial-time solvable on $rK_{1,3}$-free graphs.
\end{lemma}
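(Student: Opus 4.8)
This statement is a known result (it is cited), so the plan is to reconstruct the strategy one would use to establish it. First I would pass to the complementary problem: since a set $S$ is a minimum vertex cover of a graph $G=(V,E)$ if and only if $V\setminus S$ is a maximum independent set, solving \vc{} is equivalent to solving \textsc{Maximum Independent Set} (\textsc{MIS}), so it suffices to prove that \textsc{MIS} is polynomial-time solvable on $rK_{1,3}$-free graphs. The base case $r=1$ is exactly the class of claw-free graphs, for which \textsc{MIS} is solvable in polynomial time by the classical algorithms of Minty and of Sbihi (via the augmenting-graph method). This is the engine that the whole argument rests on.

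For general $r$, the plan is to localise the ``claw structure'' into a bounded part of the graph and then reduce to the claw-free case. Concretely, I would take a maximal family $\mathcal{C}=\{C_1,\dots,C_p\}$ of pairwise anticomplete induced claws (vertex-disjoint, with no edges between distinct $C_i$). Since $p$ such claws together induce a copy of $pK_{1,3}$, the $rK_{1,3}$-freeness of $G$ forces $p\le r-1$, so the set $Y=\bigcup_i V(C_i)$ has at most $4(r-1)$ vertices. By maximality, every induced claw of $G$ meets the closed neighbourhood $N[Y]$: otherwise a claw disjoint from and anticomplete to $Y$ could be added to $\mathcal{C}$. Hence $G-N[Y]$ is claw-free, and on it \textsc{MIS} can be solved directly by the base case.

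It remains to account for the vertices of $N[Y]$, and this is where I expect the main obstacle to lie. The naive idea is to guess the intersection $A=S\cap Y$ of a maximum independent set $S$ with the bounded set $Y$ (only $2^{O(r)}$ choices), delete $Y\cup N(A)$, and solve \textsc{MIS} on the residual graph. The difficulty—and the reason this is a genuine theorem rather than a one-line reduction—is that this residual graph need not be claw-free: while $G-N[Y]$ is claw-free, deleting only $Y\cup N(A)$ leaves in play every vertex adjacent to $Y\setminus A$ but not to $A$, and such vertices can still form claws. In other words, the claw-packing number is not reduced by removing the claw \emph{vertices}; one would have to remove whole \emph{neighbourhoods}, which are unbounded and cannot be enumerated.

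Overcoming this is the technical heart of the argument. The expected resolution is to adapt the augmenting-graph search of Minty and Sbihi so that it tolerates the boundedly many claws localised around $Y$: rather than reducing to a claw-free instance outright, one searches for augmenting structures in the presence of at most $r-1$ claws, exploiting the bounded size of $Y$ to enumerate the possible behaviours of an augmenting structure near the claws while using claw-freeness everywhere else. I expect this augmenting-structure analysis—showing that a maximum independent set can be certified and, if not maximum, improved in polynomial time despite the residual claws—to be the step requiring the real work, and it is precisely what is carried out in~\cite{BM18}.
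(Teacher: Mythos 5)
The paper does not actually prove this lemma: it is imported as a black box from Brandst\"adt and Mosca~\cite{BM18}, so there is no in-paper argument to compare yours against. Judged as a proof, your proposal has a genuine gap, and to your credit you say so yourself: the last paragraph concedes that the ``technical heart'' is not carried out and defers it back to~\cite{BM18}. The parts you do establish are the easy parts and are correct --- complementing to \textsc{Maximum Independent Set}, the bound $p\le r-1$ on a maximal pairwise-anticomplete claw packing (so $|Y|\le 4(r-1)$), the fact that by maximality every induced claw meets $N[Y]$ and hence $G-N[Y]$ is claw-free, and the accurate diagnosis of why guessing $A=S\cap Y$ and deleting $Y\cup N(A)$ leaves a residual graph that need not be claw-free (nor even $(r-1)K_{1,3}$-free, so no induction on $r$ is set up either). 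But identifying the obstacle is not overcoming it: the claim that Minty--Sbihi's augmenting-path search ``can be adapted to tolerate boundedly many claws localised around $Y$'' is precisely the content of the cited theorem, and nothing in your sketch makes it plausible --- augmenting graphs in non-claw-free graphs need not be paths, and bounding the size of $Y$ does not bound the size or structure of the augmenting graphs that pass through $N[Y]$, which is where all the difficulty sits.

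Two further cautions. First, you should not present the claw-packing/localisation scheme as ``the strategy one would use'': it is one natural first attempt, and your own analysis shows it stalls exactly where the real work begins, so it is not evidence that this is the route taken in~\cite{BM18}. Second, since the present paper invokes the lemma purely as a cited result (for the unweighted case, where Minty/Sbihi suffice as the base case without the Nakamura--Tamura correction), the appropriate treatment here is the citation itself; an avowedly incomplete reconstruction adds a gap where the paper has none.
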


\noindent
We are now ready to prove our second polynomial-time result.

\begin{theorem}\label{t-sp1p2p3}
For every integer~$s$, {\sc Subset Vertex Cover} is polynomial-time solvable on $(sP_1+P_2+P_3)$-free graphs.
\end{theorem}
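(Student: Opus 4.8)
The plan is to prove Theorem~\ref{t-sp1p2p3} by reducing, via Lemma~\ref{lem:P1add}, to the base case $s=0$; that is, it suffices to show that {\sc Subset Vertex Cover} is polynomial-time solvable on $(P_2+P_3)$-free graphs. Indeed, a single application of Lemma~\ref{lem:P1add}, iterated $s$ times, promotes a polynomial-time algorithm for $(P_2+P_3)$-free graphs to one for $(sP_1+P_2+P_3)$-free graphs for every $s\geq 1$. So from now on I focus on an instance $(G,T,k)$ where $G=(V,E)$ is $(P_2+P_3)$-free.

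\textbf{Reduction to a structured instance.} As in the proof of Theorem~\ref{t-sp2}, first I would delete every edge of $G$ with both ends in $V\setminus T$, obtaining $G'$; this preserves the answer, since such edges are never covered by the requirement on $T$. The difficulty is that $G'$ need not remain $(P_2+P_3)$-free, so I cannot appeal directly to a structure theorem for $G'$. Instead I would use the $(P_2+P_3)$-freeness of the \emph{original} graph $G$ to understand $G[T]$, and then argue about $G'$ separately. The key structural observation I expect to exploit is that $(P_2+P_3)$-free graphs have very restricted independent-set / maximal-independent-set behaviour: a $(P_2+P_3)$-free graph is in particular $2P_3$-free and $(P_2+P_3)$ contains $P_2$ and $P_3$, so any induced subgraph (such as $G[T]$) is $(P_2+P_3)$-free as well.

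\textbf{The main algorithmic idea.} Following the template of Claim~\ref{clm:1}, I would decompose a candidate solution $S$ as $S=R\cup W$, where $R$ is a minimal vertex cover of $G'[T]=G[T]$ and $W$ is a minimum vertex cover of the bipartite graph $G'[V\setminus R]$ (bipartite exactly as in Claim~\ref{clm:2}, with classes $T\setminus R$ and $V\setminus T$). Given $R$, computing $W$ is polynomial by K\"onig's theorem. The crux is therefore to enumerate enough minimal vertex covers $R$ of $G[T]$ in polynomial time; since $G[T]$ is $(P_2+P_3)$-free it is in particular $3P_2$-free (as $3P_2$ contains $P_2+P_3$ only loosely, one should instead argue $G[T]$ is $sP_2$-free for a fixed small $s$), so by Theorems~\ref{t-by} and~\ref{t-tias} it has only polynomially many maximal independent sets, each enumerable with polynomial delay. \textbf{The hard part will be} handling the vertices of $T$ that, together with $V\setminus T$, can create large induced structures once we leave the graph class: I would need to show that either $G[T]$ is itself $sP_2$-free for some constant $s$ (letting me reuse the counting argument of Theorem~\ref{t-sp2} verbatim), or otherwise that the few ``spread-out'' configurations forced by $(P_2+P_3)$-freeness (a $P_3$ in $G[T]$ isolating almost all other edges) can be branched on in constant depth, after which each branch reduces to the $sP_2$-free case.

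Concretely, I anticipate a short case analysis on whether $G[T]$ contains an induced $P_3$. If $G[T]$ is $P_3$-free, then it is a disjoint union of cliques and the problem is easy to control directly. If $G[T]$ does contain an induced $P_3$, say on vertices $a,b,c$, then $(P_2+P_3)$-freeness of $G$ forces every edge of $G[T]$ to meet $\{a,b,c\}$, so $G[T]$ minus $N[\{a,b,c\}]$ is edgeless; branching over the $O(1)$ ways a minimal vertex cover can interact with $\{a,b,c\}$ and its neighbourhood reduces each branch to an instance whose relevant part is $sP_2$-free for a small constant $s$, to which Theorem~\ref{t-sp2} (or its counting engine) applies. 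Summing the polynomially many choices of $R$ against a single bipartite vertex-cover computation per choice yields an overall polynomial running time, and correctness follows exactly as in Claim~\ref{clm:1}. This completes the plan for the base case, and Lemma~\ref{lem:P1add} then delivers the full statement.
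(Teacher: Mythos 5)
There is a genuine gap. Your outer reduction (Lemma~\ref{lem:P1add} to get from $s\geq 1$ down to $s=0$) matches the paper, but the engine you propose for the base case---enumerate all minimal vertex covers $R$ of $G[T]$ via Theorems~\ref{t-by} and~\ref{t-tias} and solve a bipartite instance for each---cannot work, because a $(P_2+P_3)$-free graph need not be $sP_2$-free for any \emph{constant} $s$. For instance, $G[T]$ may be a disjoint union of $\Theta(n)$ edges (which is $P_3$-free, hence $(P_2+P_3)$-free), and then $G[T]$ has $2^{\Theta(n)}$ maximal independent sets, so the enumeration is exponential. Your proposed repair for the case that $G[T]$ contains an induced $P_3$ on $\{a,b,c\}$ does not close this hole either: while $G[T]-N[\{a,b,c\}]$ is indeed edgeless, the part of $G[T]$ inside $N[\{a,b,c\}]$ can still contain $\Theta(n)$ independent edges (e.g.\ a $P_3$ $a\mbox{-}b\mbox{-}c$ with $\Theta(n)$ pendant edges $x_iy_i$ all of whose $x_i$ are adjacent to $a$ is $(P_2+P_3)$-free), so the claim that each branch becomes ``$sP_2$-free for a small constant $s$'' is false, and the number of ways a minimal vertex cover interacts with $N[\{a,b,c\}]$ is not $O(1)$.

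Moreover, the subcase you dismiss as ``easy to control directly''---$G[T]$ (or, in the paper, $G[T']$) a disjoint union of cliques---is exactly where the paper's proof does its real work. Choosing which single vertex of each clique component to leave uncovered interacts nontrivially with the vertices of $V\setminus T$, and the paper needs a structural claim (every vertex of $V\setminus T$ outside the solution that ``sees'' two components, or sees a component unevenly, must be semi-complete to all large components) together with a case split on whether there are at most two or at least three large components. Two further ingredients of the paper are absent from your plan: (i) the optimal $T$-vertex cover may simply be a vertex cover of $G$, which the paper handles by running the polynomial-time \textsc{Vertex Cover} algorithm for $rK_{1,3}$-free graphs (Lemma~\ref{l-lk13}, applicable since $P_2+P_3\ssi 2K_{1,3}$); and (ii) otherwise the paper branches on an uncovered edge $uv$ with $u,v\in V\setminus T$, and it is only \emph{relative to this edge} that the relevant part $T'$ of $T$ becomes $P_3$-free. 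Without these steps your decomposition $S=R\cup W$ has no polynomial-size family of candidates for $R$ to range over, so the argument does not go through.
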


\begin{proof}
Due to Lemma~\ref{lem:P1add}, we can take $s=0$, so we only need to give a polynomial-time algorithm for $(P_2+P_3)$-free graphs.
Hence, let $(G,T,k)$ be an instance of {\sc Subset Vertex Cover}, where $G=(V,E)$ is a $(P_2+P_3)$-free graph. 

First, we compute a minimum vertex cover~$S_{\mbox{vc}}$ of $G$. As $G$ is $(P_2+P_3)$-free, and thus $2K_{1,3}$-free, this takes polynomial time by Lemma~\ref{l-lk13}.

We now set out to compute a minimum $T$-vertex cover $S$ of $G$ that is not a vertex cover of $G$. Then $G-S$ must contain an edge between two vertices in $V \setminus T$. We branch by considering all $O(n^2)$ options of choosing this edge. For each chosen edge $uv$ we set out to construct a smallest $T$-vertex cover $S_{uv}$ of $G$ that does not contain $u$ nor $v$. Observe that we thus must add every neighbour of $u$ or $v$ that belongs to $T$ to $S_{uv}$, as otherwise $G-S_{uv}$ would contain an edge $ut$ or $vt$ for some vertex $t\in T$. Hence, we have $N(\{u,v\}) \cap T \subseteq S_{uv}$.

Let $T'=T\setminus N(\{u,v\})$ consist of all vertices of $T$ that are neither adjacent to $u$ nor to $v$. As $G$ is $(P_2+P_3)$-free and $uv\in E$, we find that $G[T']$ is $P_3$-free, and thus $G[T']$ is a disjoint union of complete graphs.
We call a connected component of $G[T']$ {\it large} if it has at least two vertices; else we call it {\it small} (so every large connected component of $G[T']$ is a complete graph on at least two vertices and every small connected component of $G[T']$ is an isolated vertex).

\medskip
\noindent
{\bf Case 1.} The graph $G[T']$ has at most two large connected components.\\
Let $D_1$ and $D_2$ be the large connected components of $G[T']$ (if they exist).
As $V(D_1)$ and $V(D_2)$ are cliques in $G[T']$, at most one vertex of $D_1$ and at most one vertex of $D_2$ can belong to $G-S_{uv}$ if $S_{uv}$ is to become a $T$-vertex cover. We branch by considering all $O(n^2)$ options of choosing at most one vertex of $D_1$ and at most one vertex of $D_2$ to be these vertices. For each choice of vertices we do as follows. We add all other vertices of $D_1$ and $D_2$ to a set $S^*_{uv}$.
Let $T^*$ be the set of vertices of $T'$ that we have not added to $S^*_{uv}$. Then $T^*$ consist of all the vertices of the small connected components of $G[T']$ and at most one vertex of each of the at most two large connected components of $G[T']$. Hence, $T^*$ is an independent set.

We delete every edge between any two vertices in $V \setminus T$. Now the graph $G^*$ induced by $T^*\cup (V\setminus T)$ is bipartite, namely with partition classes $T^*$ and $V\setminus T$. It remains to compute a minimum vertex cover~$S^*$ of $G^*$. This can be done in polynomial time by applying K\H{o}nig's Theorem. We let $S^*_{uv}$ consist of $S^*$ together with all vertices of $T$ that we had added to $S^*_{uv}$ already. 

Finally, let $S_{uv}$ be a smallest such set $S^*_{uv}$ (found over all $O(n^2)$ branches) together with $N(\{u,v\}) \cap T$, so $S_{uv}$ is a smallest $T$-vertex cover of $G$ that does not contain $u$ nor $v$. This completes Case~1.

\medskip
\noindent
{\bf Case 2.} The graph $G[T']$ has at least three large connected components.\\
Let $D_1,\ldots, D_p$, for some $p\geq 3$, be the large connected components of $G[T']$. 
Let $A$ consist of all the vertices of the small connected components of $G[T']$.

We start by considering each vertex~$w\in V\setminus T$ with one of the following properties:

\begin{itemize}
\item [1.] for some $i$, $w$ has a neighbour and a non-neighbour in $D_i$; or
\item [2.] for some $i,j$ with $i\neq j$, $w$ has a neighbour in $D_i$ and a neighbour in $D_j$; or
\item [3.] for some $i$, $w$ has a neighbour in $D_i$ and a neighbour in $A$.
\end{itemize}

\noindent
Let $W$ be the set of all vertices of $V\setminus T$ that satisfy at least one of the Properties 1--3 (note that $W$ does not contain $u$ nor $v$).
We say that a vertex $w\in V \setminus T$ is {\it semi-complete} to some $D_i$ if $w$ is adjacent to all vertices of $D_i$ except at most one.
\begin{figure}[t]
	\centering
	\includegraphics[scale=0.8]{./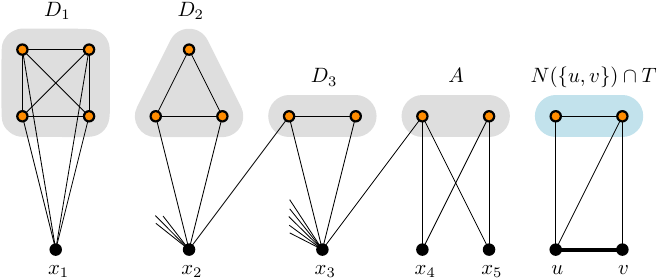}
	\caption{An illustration of the graph $G$ in the proof of Case~2 of Theorem~\ref{t-sp1p2p3}, where $T$ consists of the orange vertices, and $p=3$. Edges in $G[V\setminus T]$ are not drawn, and for $x_2$ and $x_3$ some edges are partially drawn. Vertices $x_1, x_4, x_5,u,v$ do not belong to $W$, as they do not satisfy one of the Properties 1--3, whereas $x_2$ belongs to $W$, as $x_2$ satisfies Property 1 for $D_2$ (and also Property 2 for $D_2$ and $D_3$), and $x_3$ belongs to $W$, as $x_3$ satisfies Property~3 for $D_3$.}\label{fig:p2p3}
\end{figure}
We show the following claim. See Figure~\ref{fig:p2p3} for an illustration.

\begin{claim}\label{c-claim}
Every vertex $w\in W$ is semi-complete to $D_i$ for every $i\in\{1,\ldots,p\}$.
\end{claim}

\noindent
{\it Proof.}
Let $w\in W$.
First, assume $w$ satisfies Property~1. Let $x$ and $y$ be vertices of some $D_i$, say $D_1$, such that $wx\in E$ and $wy\notin E$. For a contradiction, assume $w$ is not semi-complete to some $D_j$. Hence, $D_j$ contains vertices $y'$ and $y''$, such that $wy'\notin E$ and $wy''\notin E$. If $j\geq 2$, then $\{y',y'',w,x,y\}$ induces a $P_2+P_3$ (as $D_1$ and $D_j$ are complete graphs). This contradicts that $G$ is $(P_2+P_3)$-free. Hence, $w$ is semi-complete to every $V(D_j)$ with $j\geq 2$. 
Now suppose $j=1$. As $p\geq 3$, the graphs $D_2$ and $D_3$ exist. As $w$ is semi-complete to every $D_j$ for $j\geq 2$ and every $D_j$ is large, there exist vertices $x'\in V(D_2)$ and $x''\in V(D_3)$ such that $wx'\in E$ and $wx''\in E$. However, now $\{y',y'',x',w,x''\}$ induces a $P_2+P_3$, a contradiction. 

Now assume $w$ satisfies Property~2, say $w$ is adjacent to $x_1\in V(D_1)$ and to $x_2\in V(D_2)$. Suppose $w$ is not semi-complete to some $D_j$. If $j\geq 3$, then the two non-neighbours of $w$ in $D_j$, together with $x_1,w,x_2$, form an induced $P_2+P_3$, a contradiction. Hence, $w$ is semi-complete to every $D_j$ for $j\geq 3$. If $j\in \{1,2\}$, say $j=1$, then let $y,y'$ be two non-neighbours of $w$ in $D_1$ and let $x_3$ be a neighbour of $w$ in $D_3$. Now, $\{y,y',x_2,w,x_3\}$ induces a $P_2+P_3$, a contradiction. Hence, $w$ is semi-complete to $D_1$ and $D_2$ as well.

Finally, assume $w$ satisfies Property~3, say $w$ is adjacent to $z\in A$ and $x_1\in V(D_1)$. If $w$ is not semi-complete to $D_j$ for some $j\geq 2$, then two non-neighbours of $w$ in $D_j$, with $z,w,x_1$, form an induced $P_2+P_3$, a contradiction. Hence, $w$ is semi-complete to every $D_j$ with $j\geq 2$. As before, by using a neighbour of $w$ in $D_2$ and one in $D_3$, we find that $w$ is also semi-complete to~$D_1$.
This completes the proof of Claim~\ref{c-claim}. \dia

\medskip
\noindent
We now first consider the possible situation where $S_{uv}$ will not contain some vertex $w\in W$. We branch by considering all $O(n)$ options for choosing such a vertex $w$. For each chosen vertex~$w$, we do as follows. Let $T^w=N(w)\cap T'$ be the set of neighbours of $w$ in $T'$ (recall that $T'=T\setminus N(\{u,v\})$). So, all of the vertices of $T^w$ must belong to $S_{uv}$. By Claim~\ref{c-claim}, we find that $T'\setminus T^w$ does not contain two vertices from the same large connected component of $G[T']$. Hence, $T'\setminus T^w$ is an independent set. We delete any edge between two vertices from $V\setminus T$, so $V\setminus T$ becomes an independent set as well. We now compute, in polynomial time by K\H{o}nig's Theorem, a minimum vertex cover in the resulting bipartite graph with partition classes $T'\setminus T^w$ and $V\setminus T$.
We let $S^w_{uv}$ be the set that is the union of this vertex cover, $T^w$, and $(N\{u,v\}\cap T)$. By construction, $S^w_{uv}$ is a smallest vertex cover of $G$ that does not contain any vertex of $\{u,v,w\}$. After processing all of the $O(n)$ branches, we keep a smallest set $S^w_{uv}$, which we denote by~$S_{uv}^*$.

We are left to examine the possible situation where $S_{uv}$ contains every vertex of $W$. Let $G'$ be the subgraph obtained from $G$ by removing every vertex of 
$W\cup (N(\{u,v\})\cap T) \cup \{u,v\}$. We will now describe how to compute in polynomial time a smallest $T'$-vertex cover $S_{uv}'$ of $G'$ (recall that $T'=T\setminus N(\{u,v\})$).

We start with considering the connected component~$D_1'$ of~$G'$ that contains (all) the vertices from~$D_1$. As no vertex from $W$ belongs to $G'$ by definition, $D_1'$ contains no vertices from $V\setminus T$ satisfying Property~2 or~3. Hence, $D_1'$ contains no vertices from $A$ or from any $D_j$ with $j\geq 2$ either. So it holds that 
$$V(D_1')\cap T=V(D_1).$$
First suppose that $D_1'=D_1$. 
As $D_1'=D_1$ is a complete graph, 
we add all vertices of $D_1$ except for one arbitrary vertex of $D_1$ to $S_{uv}'$.
Now suppose that there exists a vertex $x$ in $V(D_1')\setminus V(D_1)$. As $D_1'$ is connected, we may assume without loss of generality that $x$ has a neighbour in $D_1$. Consequently, $x$ is complete to $D_1$, as $x$ does not belong to $W$ and thus does not satisfy Property 1. This implies that we must put at least $|V(D_1)|$ vertices from~$D_1'$ 
in~$S_{uv}'$, so we might just as well put every vertex of $D_1$ in~$S_{uv}'$. As $V(D_1')\cap T=V(D_1)$, we do not need to add any more vertices from $D_1'$ to $S_{uv}'$. 

We do the same as we did for $D_1$ for the connected components $D_2', \ldots, D_p'$ of $G'$ that contain the sets $V(D_2),\ldots, V(D_p)$, respectively. 

Now, it remains to consider the induced subgraph $F$ of $G'$ that consists of the connected components of $G'$ containing the vertices of $A$. Recall that $A$ is an independent set. We delete every edge between two vertices in $V\setminus T$, resulting in another independent set. This changes $F$ into a bipartite graph, and we can compute a minimum vertex cover $S^F_{uv}$ of $F$ in polynomial time due to K\H{o}nig's Theorem. We add $S^F_{uv}$ to $S_{uv}'$. In this way we computed the set $S_{uv}'$. We let $S_{uv}''=S_{uv}'\cup W\cup (N(\{u,v\})\cap T)$. By construction, $S_{uv}''$ is a smallest vertex cover $S_{uv}''$ of $G$ that contains $W$ but does not contain $u$ and $v$.

Now compare the size of $S_{uv}''$ with the size of $S_{uv}^*$, and pick one of smallest size as $S_{uv}$. This completes Case~2.

\medskip
\noindent
Finally, for each choice of the edge $uv \in E(G[V \setminus T])$, we consider the output $S_{uv}$, and take a smallest set found. We compare its size with the size of $S_{\mbox{vc}}$, again taking a smallest set as the final solution.

The correctness of our algorithm follows from the above description. The number of branches is $O(n^2)$ in Case~1 and $O(n)$ in Case~2. Hence, as there $O(n^2)$ vertex pairs $u,v$ to consider, the total number of branches is $O(n^4)$. As each branch takes polynomial time to process, this means that the total running time of our algorithm is polynomial. This completes our proof.
\end{proof}

\section{The Proof of Theorems~\ref{t-dicho} and~\ref{t-dicho4}}\label{s-dicho}

We first prove Theorem~\ref{t-dicho}, which we restate below.

\medskip
\noindent
{\bf Theorem~\ref{t-dicho} (restated).}
{\it For a graph~$H\neq rP_1+sP_2+P_3$ for any $r\geq 0$, $s\geq 2$; $rP_1+sP_2+P_4$ for any $r\geq 0$, $s\geq 1$; or $rP_1+sP_2+P_t$ for any $r\geq 0$, $s\geq 0$, $t\in \{5,6\}$,
{\sc Subset Vertex Cover} on $H$-free graphs is polynomial-time solvable if 
$H\ssi sP_1+P_2+P_3$, $sP_2$, or $sP_1+P_4$ for some $s\geq 1$, and \NP-complete otherwise.}

\begin{proof}
	Let $H$ be a graph not equal to $rP_1+sP_2+P_3$ for any $r\geq 0$, $s\geq 2$; $rP_1+sP_2+P_4$ for any $r\geq 0$, $s\geq 1$; or $rP_1+sP_2+P_t$ for any $r\geq 0$, $s\geq 0$, $t\in \{5,6\}$. If $H$ has a cycle, then we apply Theorem~\ref{t-hard}. Else, $H$ is a forest. If $H$ has a vertex of degree at least~$3$, then the class of $H$-free graphs contains all $K_{1,3}$-free graphs, and we apply Theorem~\ref{t-hard1}. Else, $H$ is a linear forest. If $H$ contains an induced $2P_3$, then we apply Theorem~\ref{t-hard2}. If not, then $H\ssi sP_1+P_2+P_3$, $sP_2$, or $sP_1+P_4$ for some $s\geq 1$. In the first case, apply Theorem~\ref{t-sp1p2p3}; in the second case Theorem~\ref{t-sp2}; and in the third case Theorem~\ref{t-sp1p4}.
\end{proof}

\noindent
We now prove Theorem~\ref{t-dicho4}, which we restate below.

\medskip
\noindent
{\bf Theorem~\ref{t-dicho4} (restated).}
{\it For a graph $H$, {\sc Subset Vertex Cover} on instances $(G,T,k)$, where $G[T]$ is $H$-free, is polynomial-time solvable if $H\ssi sP_2$ for some $s\geq 1$, and \NP-complete otherwise.}

\begin{proof}
	First note that any \textsf{NP}-completeness result of Theorem~\ref{t-dicho} also holds here, as $G[T]$ is $H$-free if the entire graph is $H$-free. We immediately get that \svc{} is \textsf{NP}-complete if $H$ is not a linear forest.
	Suppose $P_3\ssi H$. Observe that in the instance of Theorem~\ref{t-hard2}, $G[T]$ is a disjoint union of edges, so $G[T]$ is $P_3$-free, and hence, $H$-free. We see that \svc{} is \textsf{NP}-complete when $P_3\subseteq_i H$.
	In the remaining case, $H\ssi sP_2$ for some $s\geq 1$, and we apply Theorem~\ref{t-sp2}.
\end{proof}

\section{Graphs of Bounded Clique-Width and Bounded Mim-width}\label{s-mim}
In this section, we give a polynomial algorithm for {\sc Subset Vertex Cover} on graphs of bounded clique-width and of bounded mim-width.

We begin with a result regarding the clique-width of probe graphs. Recall that $\mathcal{G}_p$ denotes the class of probe graphs of some graph class $\mathcal{G}$ and that a graph $G$ belongs to $\mathcal{G}_p$ if there is some independent set $N$ of $G$ and a set of edges $F$ with end-vertices in $N$ such that $G + F \in \mathcal{G}$.
The clique-width $\cw(G)$ of a graph~$G$, introduced by Courcelle, Engelfriet and Rozenberg~\cite{CER93}, is the minimum number of labels needed to construct $G$ by means of the following four operations:
\begin{enumerate}
\item creating a new labeled vertex;
\item taking the disjoint union of two labeled graphs;
\item adding an edge between every vertex of label $i$ and every vertex of label~$j$, where $i \neq j$; and
\item changing the label of all vertices with label~$i$ to label~$j$ for some $j\geq i$.
\end{enumerate}
\noindent
Cographs have clique-width at most~$2$~\cite{CO00}.
The authors of~\cite{CCKLP12,CKKLP05} observed that probe cographs have clique-width at most~$4$. As {\sc Vertex Cover} is polynomial-time solvable on graphs of bounded clique-width~\cite{CMR00}, this observation directly implies Theorem~\ref{t-sp1p4} for $s=0$. In fact, the observation can be readily generalized, as we prove below for the sake of completeness. 

\begin{proposition}\label{p-cw}
Let $G = (V,E)$ be a graph, 
let $N$ be an independent set of $G$, and
let $F$ be a set of edges with both end-vertices in $N$.
Then we have $\cw(G) \leq 2\cw(G+F)$.
\end{proposition}
\begin{proof}
We adapt a sequence of operations that constructs $G+F$ using only $\cw(G+F)$ labels to a sequence of operations that constructs $G$ using $2\cw(G+F)$ labels.
If an operation is to create a new vertex $v$ with label $i$, then create $v$ and label it with $i_N$ if $v \in N$ and with $i_P$ otherwise.
If an operation is the disjoint union of two labeled graphs, then we keep this operation.
If an operation is adding an edge between every vertex of label $i$ and every vertex of label~$j$, where $i \neq j$, then we add an edge between every vertex of label $i_P$ and every vertex of label~$j_P$, every vertex of label $i_N$ and every vertex of label~$j_P$, and every vertex of label~$i_P$ and every vertex of label~$j_N$.
If an operation is renaming a label $i$ to $j$, then we rename $i_N$ to $j_N$ and~$i_P$ to~$j_P$.
Note that these operations construct $G$ because they include all edges from $G+F$ except the edges with both end-vertices in $N$. Clearly, we use $2\cw(G+F)$ labels, which proves the statement.
\end{proof}

We now introduce some new terminology to explain the notion of mim-width, which was first defined by Vatshelle~\cite{Va12}.
Let $G=(V,E)$ be a graph. For $X \subseteq V$, we use $2^X$ to denote its power set and $\overline{X}$ to denote $V \setminus X$. A set $M \subseteq E$ is a \emph{matching} in $G$ if no two edges in $M$ share an end-vertex. A matching $M$ is an \emph{induced matching} if no end-vertex of any edge $e \in M$ is adjacent to any end-vertex of an edge in $M\setminus \{e\}$. 
A \emph{rooted binary tree} is a rooted tree of which each node has degree~$1$ or~$3$, except for a distinguished node that has degree~$2$ and is the root of the tree. A \emph{rooted layout} $\calL = (L,\delta)$ of $G$ consists of a rooted binary tree $L$ and a bijection $\delta$ between $V$ and the leaves of $L$. For each node $x \in V(L)$, we let $L_x$ be the set of leaves that are a descendant of $x$ (including $x$ if $x$ is a leaf). We define $V_x$ as the corresponding set of vertices of $G$, that is, $V_x = \{ \delta(y) \mid y \in L_x \}$. 
For a set $A \subseteq V$, let $\mim_G(A)$ be the size of a maximum induced matching in the bipartite graph obtained from $G$ by removing all edges between vertices of $A$ and all edges between vertices of $\overline{A}=V\setminus A$. In other words, this is the bipartite graph $(A,\overline{A}, E \cap (A \times \overline{A}))$. The \emph{mim-width} $\mim_G(\calL)$ of a rooted layout $\calL = (L,\delta)$ of graph $G$ is the maximum over all $x \in V(L)$ of $\mim_G(V_x)$. If the graph in question is clear from the context, then we omit the subscript. The \emph{mim-width} $\mim(G)$ of $G$ is the minimum mim-width over all rooted layouts of $G$. See Figure~\ref{f-mimex} for an example.

\begin{figure}[ht]
	\centering
	\includegraphics[width=.8\textwidth]{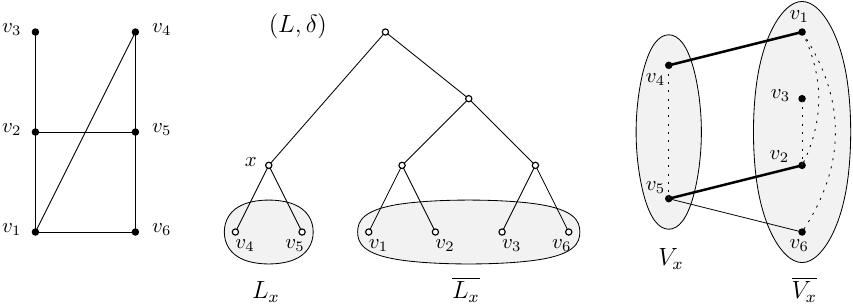}
	\caption{An example of a graph $G$ (left) with a rooted layout $\calL=(L,\delta)$ (middle), taken from~\cite{BGMPP22}. Note that $\mim(V_x)=2$ (right), and an easy check shows that in fact $\mim(\calL)=2$. The rooted layout ${\mathcal L}'$ obtained from $\calL$ by swapping 
$v_2$ and $v_5$ and swapping $v_3$ and $v_4$ yields $\mim(G)=1$.}\label{f-mimex}
\end{figure}

In general, it is not known if there exists a polynomial-time algorithm for computing a rooted layout~${\cal L}$ of a graph~$G$, such that $\mim({\cal L})$ is bounded by a function in the mim-width of $G$.
However, Belmonte and Vatshelle~\cite{BV13} 
showed that for several graph classes~${\cal G}$ of bounded mim-width, including interval graphs and permutation graphs, it is possible to find in polynomial time a rooted layout of a graph $G\in {\cal G}$ with mim-width equal to the mim-width of $G$.

We now prove the following result on the mim-width of probe graphs that is analogous to Proposition~\ref{p-cw} for clique-width.

\begin{proposition}\label{p-mim}
Let $G = (V,E)$ be a graph, let $N$ be an independent set of $G$, 
let $F$ be a set of edges with both ends in $N$,
and let $\calL = (L, \delta)$ be a rooted layout of $G+F$.
Then we have $\mim_G(\calL) \leq 2\mim_{G+F}(\calL)$ and, in particular, $\mim(G) \leq 2\mim(G+F)$.
\end{proposition}
\begin{proof}
As $G$ and $G+F$ have identical vertex sets, $\calL$ is a rooted layout of $G$ too.
Let $x \in V(L)$ be such that $\mim_G(V_x) = \mim_G(\calL)$,
and let $H$ denote the bipartite graph $(V_x, \overline{V_x}, E \cap (V_x \times \overline{V_x}))$.
In $H$, let $E_1$ denote the edges with one end-vertex in $V_x \cap N$,
let $E_2$ denote the edges with one end-vertex in $\overline{V_x} \cap N$,
and let $E_3$ denote the edges with no end-vertex in $N$.
Note that the sets $E_1$, $E_2$ and $E_3$ are a partition of $E(H)$ since $N$ is independent in $G$.
Lastly, let $M$ be a maximum induced matching of $H$.
In particular, $|M| = \mim_G(V_x)$.

By symmetry, we may assume that $|E_1 \cap M| \leq |E_2 \cap M|$.
The set $M' = M \cap (E_2 \cup E_3)$ is an induced matching of $H+F$ too.
On the one hand, we have $|M'| \geq |M| / 2$ by the assumption that $|E_2 \cap M| \geq |E_1 \cap M|$.
On the other hand, we have $|M'| \leq \mim_{G+F}(V_x)$ by definition.
Together, this implies
\[
\mim_G(\calL) = \mim_G(V_x) = |M| \leq 2|M'| \leq 2\mim_{G+F}(V_x) \leq 2\mim_{G+F}(\calL).
\]
This argument holds for any rooted layout $\calL$, so in particular, it holds for the layout $\calL$ such that $\mim_{G+F}(\calL) = \mim(G+F)$. For any layout $\calL$, it holds that $\mim(G) \leq \mim_G(\calL)$, and so we may conclude that $\mim(G) \leq 2\mim(G+F)$ holds. This completes the proof.
\end{proof}

Let $\mathcal{G}$ be a class of graphs of bounded mim-width.
Then, Proposition~\ref{p-mim} implies that $\mathcal{G}_p$ also has bounded mim-width.
Therefore, an algorithm for \svc{} on classes of bounded mim-width graphs is implied by results in the literature.
Suppose that $(G, T, k)$ is an instance of \svc{} is given together with a rooted layout $\calL = (L, \delta)$ of $G$.
Let $G'$ be the graph obtained from $G$ by deleting all edges not incident to a vertex of $T$.
Then $\calL$ is also a rooted layout of $G'$ and $\mim_{G'}(\calL) \leq 2\mim_{G}(\calL)$ by Proposition~\ref{p-mim}.
Now, solving the \svc{} instance $(G, T, k)$ amounts to solving the \vc{} instance $(G', k)$.
The dynamic programming algorithm for \vc{} of Bui-Xuan et al.~\cite{BTV13} has running time $O(n^4 \cdot \nec_1(T, \delta)^3)$ ($\nec_d(.)$ is defined below).
Belmonte and Vatshelle~\cite{BV13} showed that $\nec_d(A) \leq |A|^{d \cdot \mim_G(A)}$ for any $A \subseteq V(G)$.
Together, this implies an $O(n^{6\mim_{G}(\calL) + 4})$-time algorithm for \svc{} with a given rooted layout $\calL$ of $G$.
However, this is a weaker result than that of Theorem~\ref{t-svcmimresult}, which is attained through a direct algorithm that we give next.

We now introduce the notion of neighbour equivalence, which was first defined by Bui-Xuan et al.~\cite{BTV13}. 
Let $G=(V,E)$ be a graph on $n$ vertices. Let $A \subseteq V$ and $d \in \mathbb{N}^+$. We say that $X, W \subseteq A$ are \emph{$d$-neighbour equivalent} \wrt $A$, denoted $X \equiv^A_d W$, if $\min\{d, |X \cap N(v)|\} = \min\{d,|W \cap N(v)|\}$ for all $v \in \overline{A}$. Clearly, this is an equivalence relation. We let $\nec_d(A)$ denote the number of equivalence classes of $\equiv^A_d$.
For a rooted layout $(L, \delta)$ of $G$, we let $\nec_d(T, \delta)$ denote the maximum of $\nec_d(V_x)$ over all $x \in V(L)$.
For each $X \subseteq A$, let $\rep^A_d(X)$ denote the lexicographically smallest set $R \subseteq A$ such that $R \equiv^A_d X$ and $|R|$ is minimum. This is called the \emph{representative} of $X$. We use $\calR^A_d = \{ \rep^A_d(X) \mid X \subseteq A \}$. Note that $|\calR^A_d| \geq 1$, as the empty set is always a representative. The following lemma allows us to work efficiently with representatives.

\begin{lemma}[Bui-Xuan et al.~\cite{BTV13}]\label{l-rep}
	It is possible to compute in time $O(\nec_d(A) \cdot n^2 \log (\nec_d(A)))$, the set $\calR^A_d$ and a data structure that given a set $X \subseteq A$, computes $\rep^A_d(X)$ in $O(|A| \cdot n \log (\nec_d(A)))$ time.
\end{lemma}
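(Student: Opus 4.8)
The plan is to exploit the fact that the equivalence class of a set $X\subseteq A$ under $\equiv^A_d$ is completely described by its \emph{signature}, the vector $\big(\min\{d,|X\cap N(v)|\}\big)_{v\in\overline A}$: two subsets of $A$ are $d$-neighbour equivalent if and only if they have the same signature. Hence there are exactly $\nec_d(A)$ distinct signatures among the subsets of $A$, and computing $\calR^A_d$ amounts to finding, for each realizable signature, the $\prec$-minimum subset realizing it, where $X\prec Y$ means that $|X|<|Y|$, or $|X|=|Y|$ and $X$ is lexicographically smaller; this $\prec$-minimum is precisely $\rep^A_d$ of that class. The engine of the argument is the monotonicity property that for $v\in A$ with $v\notin X$ and $v\notin Y$, $X\equiv^A_d Y$ implies $X\cup\{v\}\equiv^A_d Y\cup\{v\}$. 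Equivalently, for $v\notin X$ the signature of $X\cup\{v\}$ is obtained from that of $X$ by incrementing (and re-truncating at $d$) the coordinates indexed by $N(v)\cap\overline A$; in particular it depends only on the signature of $X$ and on $v$.

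First I would fix an ordering $a_1,\dots,a_m$ of $A$ (with $m=|A|$) and run a dynamic program over the prefixes $\{a_1,\dots,a_j\}$. Maintain a balanced search tree $\mathcal S_j$ keyed by signature, storing for each class realizable by a subset of $\{a_1,\dots,a_j\}$ its current $\prec$-minimum such subset together with its signature. Passing from $j-1$ to $j$, every subset of $\{a_1,\dots,a_j\}$ either avoids $a_j$ (already recorded in $\mathcal S_{j-1}$) or contains it, and a subset containing $a_j$ has the form $R\cup\{a_j\}$ with $a_j\notin R\subseteq\{a_1,\dots,a_{j-1}\}$. By the monotonicity property the signature of $R\cup\{a_j\}$ depends only on the signature of $R$, so the classes gained by including $a_j$ are exactly the images under the ``add $a_j$'' transform of the classes in $\mathcal S_{j-1}$. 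I would therefore initialise $\mathcal S_j$ as a copy of $\mathcal S_{j-1}$ and, for each stored representative $R$, compute the signature of $R\cup\{a_j\}$ from that of $R$ in $O(n)$ time and insert $R\cup\{a_j\}$ into $\mathcal S_j$, keeping the $\prec$-smaller set on a collision. After processing $a_m$, the tree $\mathcal S_m$ ranges over all classes of subsets of $A$, its stored sets form $\calR^A_d$, and it doubles as the query structure.

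The step that needs care, and which I expect to be the main obstacle, is proving that this process really records $\rep^A_d$ for every class rather than merely some member of it. The key sub-claim is that the $\prec$-minimum subset $Z$ of $\{a_1,\dots,a_j\}$ in any class is always generated as a candidate: if $a_j\notin Z$ then $Z$ is the representative already present in $\mathcal S_{j-1}$, while if $a_j\in Z$ then $Z\setminus\{a_j\}$ must itself be the $\prec$-minimum of its class in $\mathcal S_{j-1}$. The latter uses that $a_j$ has the largest index in $\{a_1,\dots,a_j\}$, so appending $a_j$ preserves the order $\prec$: were $R'\prec Z\setminus\{a_j\}$ the recorded representative, then $R'\cup\{a_j\}\equiv^A_d Z$ (by monotonicity, as $a_j$ lies outside both sets) and $R'\cup\{a_j\}\prec Z$, contradicting the $\prec$-minimality of $Z$. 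One must also verify the interaction of the cardinality tie-breaking in $\prec$ with lexicographic comparison, and that the empty set, the representative of its own class, seeds the recursion correctly.

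Finally, for the running time, at each of the $m\le n$ stages I process at most $\nec_d(A)$ stored classes (each realizing a distinct signature of some subset of $A$); computing each new signature costs $O(n)$, and each insertion into the balanced tree costs $O(\log\nec_d(A))$ signature comparisons of length $|\overline A|=O(n)$, i.e.\ $O(n\log\nec_d(A))$, giving $O(\nec_d(A)\cdot n^2\log\nec_d(A))$ overall, as claimed. For a query on $X\subseteq A$ I would compute its signature in $O(|A|\cdot n)$ time by scanning the neighbourhoods of the elements of $X$ and then look it up in $\mathcal S_m$ in $O(n\log\nec_d(A))$ time, for the stated bound $O(|A|\cdot n\log\nec_d(A))$.
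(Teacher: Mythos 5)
The paper does not prove this lemma itself: it is imported verbatim from Bui-Xuan, Telle and Vatshelle~\cite{BTV13}, so there is no in-paper proof to compare against. Judged on its own, your reconstruction is correct. The two pillars you identify are exactly the right ones: the class of $X$ under $\equiv^A_d$ is determined by the signature $\bigl(\min\{d,|X\cap N(v)|\}\bigr)_{v\in\overline A}$, and the identity $\min\{d,k+1\}=\min\{d,\min\{d,k\}+1\}$ gives the monotonicity that lets you compute the signature of $R\cup\{v\}$ from the signature of $R$, hence generate new classes from old representatives. Your correctness argument for the $\prec$-minimality of the stored sets (if $a_j\in Z$ then $Z\setminus\{a_j\}$ must already be the recorded representative of its class, since otherwise $R'\cup\{a_j\}$ would beat $Z$) is sound, granted the observation that appending the largest-index element preserves both the cardinality comparison and the lexicographic comparison; you flag this and it does check out under either standard reading of ``lexicographically smallest''. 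The only organizational difference from the original argument in~\cite{BTV13} is that they grow representatives by a breadth-first search over equivalence classes starting from $\emptyset$ (adding one vertex of $A$ at a time to already-discovered representatives, so that classes are first reached by minimum-cardinality, lexicographically smallest sets), whereas you run a dynamic program over prefixes of a fixed ordering of $A$; both schemes enumerate at most $\nec_d(A)$ classes, pay $O(n)$ per signature update and $O(n\log\nec_d(A))$ per balanced-tree operation, and land on the same $O(\nec_d(A)\cdot n^2\log\nec_d(A))$ preprocessing and $O(|A|\cdot n\log\nec_d(A))$ query bounds. In short: a correct proof, essentially the same mechanism as the cited source, differing only in how the generation of candidates is scheduled.
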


\noindent
We are now ready to give an explicit polynomial-time algorithm for {\sc Subset Vertex Cover} on graphs of bounded mim-width. Our algorithm is inspired by the algorithm of Bui-Xuan et al.~\cite{BTV13} for {\sc Independent Set} and of Bergougnoux et al.~\cite{BPT22} for {\sc Subset Feedback Vertex Set}. Our presentation of the algorithm follows the presentation form in Bergougnoux et al.~\cite{BPT22}. In fact, we solve the complementary problem. Given a graph $G = (V,E)$ with a rooted layout $\calL = (L,\delta)$, a set $\Ti \subseteq V$, and a weight function $\we$ on its vertices, we find a maximum-weight \Tis on $G$. Our goal is to use a standard dynamic programming algorithm. However, the size of the table that we would need to maintain by a naive approach is too large. Instead, we work with representatives of the sets in our table. We show that we can reduce the table size so that it is bounded by the square of the number of $1$-neighbour equivalence classes.

First, we define a notion of equivalence between elements of our dynamic programming table. Given a set $\Ti \subseteq V$, a set $X \subseteq V$ is a \emph{\Tis} if in $G[X]$ there is no edge incident on any vertex of $\Ti \cap X$. Note that $X$ is a \Tis if and only if $\overline{X}$ is a $\Ti$-vertex cover.

\begin{definition}\label{d-simt}
	Let $X,W \subseteq V_x$ be \Tiss. We say that $X$ and $W$ are \emph{equivalent}, denoted by $X \sim_\Ti W$, if $X \cap \Ti \equiv^{V_x}_1 W \cap \Ti$ and $X \setminus \Ti \equiv^{V_x}_1 W \setminus \Ti$.
\end{definition}

\noindent
We now prove the following lemma.

\begin{lemma} \label{l-sim}
	For every $Y \subseteq \overline{V_x}$ and every \Tiss $X,W \subseteq V_x$ such that $X \sim_\Ti W$, it holds that $X \cup Y$ is a \Tis if and only if $W \cup Y$ is a \Tis.
\end{lemma}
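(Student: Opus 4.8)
The plan is to prove the two implications at once by exploiting symmetry. Since $\equiv^{V_x}_1$ is an equivalence relation, the relation $\sim_\Ti$ is symmetric, so it suffices to show that if $X \cup Y$ is a \Tis{} then so is $W \cup Y$; the converse follows verbatim after exchanging the roles of $X$ and $W$. I would argue by contradiction: assume $X \cup Y$ is a \Tis{} but $W \cup Y$ is not, so $G[W \cup Y]$ contains an edge $ab$ with $a \in \Ti \cap (W \cup Y)$, and derive a forbidden edge in $G[X \cup Y]$.

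The first step is to record what $1$-neighbour equivalence means. Unpacking the definition with $d = 1$, two sets $P,Q \subseteq V_x$ satisfy $P \equiv^{V_x}_1 Q$ precisely when every vertex of $\overline{V_x}$ has a neighbour in $P$ if and only if it has a neighbour in $Q$, i.e. $N(P) \cap \overline{V_x} = N(Q) \cap \overline{V_x}$. Hence $X \sim_\Ti W$ says $N(X \cap \Ti) \cap \overline{V_x} = N(W \cap \Ti) \cap \overline{V_x}$ and $N(X \setminus \Ti) \cap \overline{V_x} = N(W \setminus \Ti) \cap \overline{V_x}$. This reformulation is the crux: it lets me relocate the $V_x$-endpoint of a bad edge from $W$ to $X$ while keeping track of whether that endpoint lies in $\Ti$.

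Next I would split into cases according to whether each of $a$ and $b$ comes from $W \subseteq V_x$ or from $Y \subseteq \overline{V_x}$. If both $a,b \in W$, the edge $ab$ is incident on the $\Ti$-vertex $a$ inside $G[W]$, contradicting that $W$ is a \Tis, so this case cannot occur. If both $a,b \in Y$, then $ab$ lies in $G[Y] \subseteq G[X \cup Y]$ and is incident on $a \in \Ti$, contradicting that $X \cup Y$ is a \Tis. The two mixed cases are where the equivalence enters. If $a \in \Ti \cap W$ and $b \in Y$, then $b \in N(W \cap \Ti) \cap \overline{V_x} = N(X \cap \Ti) \cap \overline{V_x}$, so some $a' \in X \cap \Ti$ is adjacent to $b$; the edge $a'b$ then lies in $G[X \cup Y]$ and is incident on the $\Ti$-vertex $a'$, a contradiction. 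If $a \in \Ti \cap Y$ and $b \in W$, I would sub-split on whether $b \in \Ti$: using the matching ($\Ti$ or non-$\Ti$) equivalence I obtain $a \in N(X \cap \Ti) \cap \overline{V_x}$ or $a \in N(X \setminus \Ti) \cap \overline{V_x}$, so $a$ is adjacent to some $b' \in X$, and $ab'$ is an edge of $G[X \cup Y]$ incident on the $\Ti$-vertex $a$, again a contradiction.

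Once the $d = 1$ reformulation is in place, the argument is essentially bookkeeping, so I do not expect a genuine obstacle. The single point that needs care is understanding why the definition of $\sim_\Ti$ separates $X \cap \Ti$ from $X \setminus \Ti$: when transferring an edge across the cut one must keep its endpoints on the correct side of $\Ti$ so that the relocated edge is still incident on a $\Ti$-vertex. In the case $a \in \Ti \cap Y$, $b \in W$ the $\Ti$-incidence is already supplied by $a$, so the membership of $b'$ is immaterial to the final contradiction, but I must still invoke the equivalence class ($\Ti$ or non-$\Ti$) to which $b$ actually belongs in order to guarantee that a suitable $b'$ exists; getting this split right is the only subtle step.
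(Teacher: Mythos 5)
Your proposal is correct and follows essentially the same route as the paper's proof: reduce to one direction by symmetry, take an offending edge of $G[W\cup Y]$ incident on $\Ti$, rule out the cases with both endpoints in $W$ or both in $Y$, and in the mixed cases use the appropriate $1$-neighbour equivalence ($X\cap\Ti\equiv^{V_x}_1 W\cap\Ti$ or $X\setminus\Ti\equiv^{V_x}_1 W\setminus\Ti$) to relocate the $W$-endpoint into $X$ and contradict that $X\cup Y$ is a \Tis. The only difference is cosmetic: you phrase the $d=1$ equivalence as equality of neighbourhoods in $\overline{V_x}$ and organize the cases by $W$-versus-$Y$ membership first, whereas the paper enumerates the three possible $\Ti$-patterns of the bad edge directly.
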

\begin{proof}
	By symmetry, it suffices to prove one direction. Suppose that $X \cup Y$ is a \Tis, but $W \cup Y$ is not. Note that $X$ and $W$ are \Tiss by definition and that $Y$ must be a \Tis as well, because $X \cup Y$ is. Hence, the fact that $W \cup Y$ is not a \Tis implies there is an edge $uv \in E(G)$ for which:
	\begin{enumerate}
		\item $u \in W \cap \Ti, v \in Y \cap \Ti$,
		\item $u \in W \cap \Ti, v \in Y \setminus \Ti$, or
		\item $u \in W \setminus \Ti, v \in Y \cap \Ti$.
	\end{enumerate}
	In the first case, since $v \in Y \cap \Ti$ has a neighbour in $W \cap \Ti$, note that $\min\{1,|(W \cap \Ti) \cap N(v)|\} = 1$. Since $X \cap \Ti \equiv^{V_x}_1 W \cap \Ti$ by the assumption that $X \sim_\Ti W$, it follows that $\min\{1,|(X \cap \Ti) \cap N(v)|\} = 1$. Hence, there is an edge from $v \in Y \cap \Ti$ to $X \cap \Ti$, contradicting that $X \cup Y$ is a \Tis. 
	
	The second case is analogous to the first case. The third case is also analogous, but uses that $X \setminus \Ti \equiv^{V_x}_1 W \setminus \Ti$.
\end{proof}

\noindent
We now introduce a final definition.

\begin{definition}
	For every $\calA \subseteq 2^{V_x}$ and $Y \subseteq \overline{V_x}$, let
	$$\best(\calA, Y) = \max\{ \we(X) \mid X \in \calA \mbox{\ and\ } X \cup Y \mbox{\ is a \Tis} \}.$$
	Given $\calA, \calB \subseteq 2^{V_x}$, we say that $\calB$ \emph{represents} $\calA$ if $\best(\calA,Y) = \best(\calB,Y)$ for every $Y \subseteq \overline{V_x}$.
\end{definition}

\noindent
We use the above definition in our next lemma.

\begin{lemma} \label{l-reduce}
	Given a set $\calA \subseteq 2^{V_x}$, we can compute $\calB \subseteq \calA$ that represents $\calA$ and has size at most $\nec_1(V_x)^2$ in $O(|\calA| \cdot n^2 \log (\nec_1(V_x)) + \nec_1(V_x) \cdot n^2 \log (\nec_1(V_x)) + \nec_1(V_x)^2)$ time.
\end{lemma}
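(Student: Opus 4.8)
The plan is to partition the members of $\calA$ according to the equivalence relation $\sim_\Ti$ and to keep, from each class, a single element of maximum weight; Lemma~\ref{l-sim} will guarantee that the resulting subcollection $\calB$ represents $\calA$, and a simple counting argument will bound the number of classes by $\nec_1(V_x)^2$. The key structural observation is that the $\sim_\Ti$-class of a \Tis $X\subseteq V_x$ is completely determined by the pair $\big(\rep^{V_x}_1(X\cap\Ti),\,\rep^{V_x}_1(X\setminus\Ti)\big)$. Indeed, by definition $X\sim_\Ti W$ holds exactly when $X\cap\Ti\equiv^{V_x}_1 W\cap\Ti$ and $X\setminus\Ti\equiv^{V_x}_1 W\setminus\Ti$, and two subsets of $V_x$ have the same $1$-neighbour representative \wrt $V_x$ if and only if they are $1$-neighbour equivalent. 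Since each of $X\cap\Ti$ and $X\setminus\Ti$ lies in one of the $\nec_1(V_x)$ classes of $\equiv^{V_x}_1$, the number of distinct keys, and hence the number of $\sim_\Ti$-classes meeting $\calA$, is at most $\nec_1(V_x)^2$.

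The algorithm I would run is then as follows. First, using Lemma~\ref{l-rep} with $d=1$, compute $\calR^{V_x}_1$ together with the data structure that evaluates $\rep^{V_x}_1$; this costs $O(\nec_1(V_x)\cdot n^2\log\nec_1(V_x))$ time. Next, process the elements of $\calA$ one at a time: for each $X\in\calA$ compute the key $\big(\rep^{V_x}_1(X\cap\Ti),\,\rep^{V_x}_1(X\setminus\Ti)\big)$ and store $X$ in a dictionary indexed by this key, retaining for each key only the element of largest weight $\we$ seen so far, with ties broken arbitrarily. Each of the two representative evaluations takes $O(|V_x|\cdot n\log\nec_1(V_x))=O(n^2\log\nec_1(V_x))$ time by Lemma~\ref{l-rep}, so the whole pass costs $O(|\calA|\cdot n^2\log\nec_1(V_x))$. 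Finally, let $\calB$ be the set of stored maximum-weight elements. By construction $\calB\subseteq\calA$, the size of $\calB$ equals the number of occupied keys and is therefore at most $\nec_1(V_x)^2$, and the total running time is the claimed $O(|\calA|\cdot n^2\log\nec_1(V_x)+\nec_1(V_x)\cdot n^2\log\nec_1(V_x))$.

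It then remains to verify that $\calB$ represents $\calA$, that is, $\best(\calA,Y)=\best(\calB,Y)$ for every $Y\subseteq\overline{V_x}$. One inequality is immediate from $\calB\subseteq\calA$. For the reverse, I would fix $Y$ and let $X\in\calA$ attain $\best(\calA,Y)$, so that $X\cup Y$ is a \Tis; since $\Ti$-independence is preserved under taking induced subgraphs, $X$ itself is a \Tis and $\sim_\Ti$ applies to it. Letting $W\in\calB$ be the element stored under the key of $X$, we have $W\sim_\Ti X$ and $\we(W)\ge\we(X)$, and by Lemma~\ref{l-sim} the set $W\cup Y$ is also a \Tis, so $W$ witnesses $\best(\calB,Y)\ge\we(W)\ge\we(X)=\best(\calA,Y)$. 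The conceptual heart of the argument is just this combination of Lemma~\ref{l-sim} with the quadratic bound on the number of $\sim_\Ti$-classes; the only mildly delicate point is the dictionary bookkeeping, which I would handle by using the representative pairs themselves as lookup keys, so that every key comparison reduces to comparing sets drawn from $\calR^{V_x}_1$ and the per-element cost stays within the representative-evaluation bound.
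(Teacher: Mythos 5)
Your overall strategy is the same as the paper's: bucket the members of $\calA$ by the pair of $1$-neighbour representatives $\bigl(\rep^{V_x}_1(X\cap\Ti),\rep^{V_x}_1(X\setminus\Ti)\bigr)$, keep one maximum-weight element per bucket, bound the number of buckets by $\nec_1(V_x)^2$, and use Lemma~\ref{l-sim} to show the retained set represents $\calA$. However, there is a genuine gap: you keep, in each bucket, the element of largest weight \emph{regardless of whether it is a \Tis}. The lemma makes no assumption that $\calA$ consists of \Tiss, and in the intended application it does not: $\calA=\calA_a\otimes\calA_b$ contains unions $X\cup W$ that can fail to be \Tiss because of edges between $V_a$ and $V_b$. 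If a bucket contains a non-\Tis of weight $10$ and a \Tis of weight $5$, your dictionary discards the latter; then for a $Y$ witnessed only by that discarded set you get $\best(\calB,Y)=-\infty<\best(\calA,Y)$, so $\calB$ does not represent $\calA$. The failure also shows up in your correctness argument: the stored element $W$ under the key of $X$ need not be a \Tis, in which case $\sim_\Ti$ (which is defined only between \Tiss) does not relate $W$ to $X$ and Lemma~\ref{l-sim} cannot be invoked to conclude that $W\cup Y$ is a \Tis.

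The fix is small and is exactly what the paper does: within each equivalence class retain only a maximum-weight element \emph{that is a \Tis}, and retain nothing if the class contains no \Tis. The extra cost is an $O(n^2)$ \Tis-check per element of $\calA$, which fits inside the stated $O(|\calA|\cdot n^2\log(\nec_1(V_x)))$ term. With that filter added, the rest of your argument (the quadratic bound on the number of keys, the use of Lemma~\ref{l-rep} for computing representatives, and the representation proof via Lemma~\ref{l-sim}) goes through as written.
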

\begin{proof}
	We obtain $\calB$ from $\calA$ as follows: for all sets in $\calA$ that are equivalent under $\sim_\Ti$, maintain only a set $X$ that is a \Tis for which $\we(X)$ is maximum. Note that if among a collection of equivalent sets, there is no \Tis, then no set is maintained. By construction, $\calB$ has size at most $\nec_1(V_x)^2$, because there are at most $\nec_1(V_x)^2$ equivalence classes of $\sim_T$ by Definition~\ref{d-simt}.
	
	We now prove that $\calB$ represents $\calA$. Let $Y \subseteq \overline{V_x}$. Note that $\best(\calB,Y) \leq \best(\calA, Y)$, because $\calB \subseteq \calA$. Hence, if there is no $X \in \calA$ such that $X \cup Y$ is a \Tis, then $\best(\calB,Y) = \best(\calA,Y) = -\infty$. So assume otherwise, and let $W \in \calA$ satisfy $\we(W) = \best(\calA,Y)$. This means that $W \cup Y$ is a \Tis and in particular, $W$ is a \Tis. By the construction of $\calB$, there is a $X \in \calB$ that is a \Tis with $X \sim_\Ti W$ and $\we(X) \geq \we(W)$. By Lemma~\ref{l-sim}, $X \cup Y$ is a \Tis. Hence, $\best(\calB, Y) \geq \we(X) \geq \we(W) = \best(\calA, Y)$. It follows that $\best(\calB,Y) = \best(\calA,Y)$ and thus $\calB$ represents $\calA$.
	
	For the running time, note that we can implement the algorithm by maintaining a table indexed by pairs of representatives of the $1$-neighbour equivalence classes. Note that a pair of representatives uniquely identifies an equivalence class of $\sim_T$. By Lemma~\ref{l-rep}, we can compute the indices in $O(\nec_1(V_x) \cdot n^2 \log (\nec_1(V_x)))$ time, which gives us both the complete set of representatives and the data structure to compute representatives. Then for each $X \in \calA$, we can compute its representatives in $O(|V_x| \cdot n \log (\nec_1(V_x)))$ time and check whether it is a \Tis in $O(n^2)$ time. Creation of the table and returning $\calB$ takes $O(\nec_1(V_x)^2)$ time. Hence, the total running time is $O(|\calA| \cdot n^2 \log (\nec_1(V_x)) + \nec_1(V_x) \cdot n^2 \log (\nec_1(V_x)) + \nec_1(V_x)^2)$.
\end{proof}

\noindent
We are now ready to prove the following result.

\begin{theorem}\label{t-theo}
	Let $G$ be a graph on $n$ vertices with a rooted layout $(L, \delta)$. We can solve {\sc Subset Vertex Cover} in $O(\sum_{x \in V(L)} (\nec_1(V_x))^4 \cdot n^2 \log (\nec_1(V_x)))$ time.
\end{theorem}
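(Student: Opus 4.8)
The plan is to solve, by bottom-up dynamic programming over the rooted binary tree $L$, the equivalent problem of computing a maximum-weight \Tis; since a set $X$ is a \Tis if and only if $\overline{X}$ is a $\Ti$-vertex cover, running this with unit weights and comparing the optimum against $n-k$ settles the decision version. At every node $x\in V(L)$ I would maintain a family $\calB_x\subseteq 2^{V_x}$ that \emph{represents} the full family $2^{V_x}$ (in the sense of the definition of ``represents'') and has size at most $\nec_1(V_x)^2$. At the root $r$, where $V_r=V$ and $\overline{V_r}=\emptyset$, this invariant immediately gives $\best(\calB_r,\emptyset)=\best(2^{V},\emptyset)$, which is exactly the weight of a maximum-weight \Tis of $G$.

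For the base case, a leaf $x$ with $\delta(v)=x$ gets $\calA_x=\{\emptyset,\{v\}\}=2^{V_x}$, to which I apply Lemma~\ref{l-reduce} to obtain $\calB_x$. For an internal node $x$ with children $a,b$ (so $V_x=V_a\cup V_b$ is a disjoint union), I form the join $\calA_x=\{X_a\cup X_b \mid X_a\in\calB_a,\ X_b\in\calB_b\}$ and apply Lemma~\ref{l-reduce} again to obtain $\calB_x\subseteq\calA_x$ of size at most $\nec_1(V_x)^2$.

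The heart of the proof is to show that $\calA_x$ represents $2^{V_x}$; combined with the fact that $\calB_x$ represents $\calA_x$ (Lemma~\ref{l-reduce}) and the transitivity of ``represents'' (immediate from its definition), this re-establishes the invariant at $x$. Fix $Y\subseteq\overline{V_x}$ and a set $X\in 2^{V_x}$ with $X\cup Y$ a \Tis and $\we(X)=\best(2^{V_x},Y)$, and write $X=X_a\cup X_b$ with $X_a=X\cap V_a$ and $X_b=X\cap V_b$. I would first invoke the invariant for $a$ with the environment $Y'=X_b\cup Y\subseteq\overline{V_a}$: since $X_a\cup Y'=X\cup Y$ is a \Tis, we get $\best(\calB_a,Y')=\best(2^{V_a},Y')\ge\we(X_a)$, so there is $X_a'\in\calB_a$ with $X_a'\cup X_b\cup Y$ a \Tis and $\we(X_a')\ge\we(X_a)$. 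I would then invoke the invariant for $b$ with $Y''=X_a'\cup Y\subseteq\overline{V_b}$, obtaining $X_b'\in\calB_b$ with $X_a'\cup X_b'\cup Y$ a \Tis and $\we(X_b')\ge\we(X_b)$. Then $X_a'\cup X_b'\in\calA_x$ witnesses $\best(\calA_x,Y)\ge\we(X_a')+\we(X_b')\ge\we(X)$, while the reverse inequality is immediate from $\calA_x\subseteq 2^{V_x}$. (The containments $Y'\subseteq\overline{V_a}$ and $Y''\subseteq\overline{V_b}$ use only that $V_a,V_b$ are disjoint subsets of $V_x$.) I expect this two-stage ``swap one side of the cut, then the other'' argument, keeping each intermediate union a \Tis, to be the main obstacle, as it is precisely where the chosen equivalence $\sim_\Ti$ and Lemma~\ref{l-sim} must align with the bipartite structure across $(V_a,V_b)$.

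Finally, for the running time, the dominant cost at each internal node $x$ is the single call to Lemma~\ref{l-reduce} on $\calA_x$. Here $|\calA_x|\le|\calB_a|\cdot|\calB_b|\le\nec_1(V_a)^2\,\nec_1(V_b)^2\le\big(\max\{\nec_1(V_a),\nec_1(V_b)\}\big)^4$, so that call runs in $O(\nec_1(V_a)^2\nec_1(V_b)^2\cdot n^2\log\nec_1(V_x)+\nec_1(V_x)\cdot n^2\log\nec_1(V_x))$ time. Summing over the $O(n)$ nodes of $L$ and charging each join to the child attaining the larger $\nec_1$ yields the claimed bound $O\big(\sum_{x\in V(L)}\nec_1(V_x)^4\cdot n^2\log\nec_1(V_x)\big)$; this accounting, together with the one-time preprocessing of Lemma~\ref{l-rep}, is routine bookkeeping.
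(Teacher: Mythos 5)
Your proposal is correct and follows essentially the same route as the paper: bottom-up dynamic programming over $L$, maintaining at each node a family of size at most $\nec_1(V_x)^2$ that represents $2^{V_x}$ via Lemma~\ref{l-reduce}, with the join step's correctness established by swapping the two sides of the cut one at a time using the children's representativity (the paper phrases this as a chain of equalities $\best(\calA_a\otimes\calA_b,Y)=\best(2^{V_a}\otimes\calA_b,Y)=\best(2^{V_a}\otimes 2^{V_b},Y)$, which is your two-stage argument in aggregate form). The running-time accounting differs only cosmetically and yields the same bound.
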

\begin{proof}
	It suffices to find a maximum-weight \Tis of $G$. For every node $x \in V(L)$, we aim to compute a set 
	$\calA_x \subseteq 2^{V_x}$ of \Tiss such that $\calA_x$ represents $2^{V_x}$ and has size at most $p(x) := \nec_1(V_x)^2+1$. Letting $r$ denote the root of $L$, we then return the set in $\calA_r$ of maximum weight. Since $\calA_r$ represents $2^{V_r}$, this is indeed a maximum-weight \Tis of $G$.
	
	We employ a bottom-up dynamic programming algorithm to compute $\calA_x$. If $x$ is a leaf with $V_x = \{v\}$, then set $\calA_x = \{\emptyset, \{v\}\}$. Clearly, $\calA_x$ represents $2^{V_x}$ and has size at most $p(x)$. So now suppose $x$ is an internal node with children $a,b$. For any $\calA,\calB \subseteq 2^{V(G)}$, let $\calA \otimes \calB = \{X \cup Y \mid X \in \calA, Y \in \calB\}$. Now let $\calA_x$ be equal to the result of the algorithm of Lemma~\ref{l-reduce} applied to $\calA_a \otimes \calA_b$. Then, indeed, $|\calA_x| \leq p(x)$. Using induction, it remains to show the following for the correctness proof:
	
	\begin{claim}\label{c-claim2}
		If $\calA_a$ and $\calA_b$ represent $2^{V_a}$ and $2^{V_b}$ respectively, then the computed set $\calA_x$ represents $2^{V_x}$.
	\end{claim}
	
\noindent
{\it Proof.} 
We prove Claim~\ref{c-claim2} as follows.
If $\calA_a \otimes \calA_b$ represents $2^{V_x}$, then by Lemma~\ref{l-reduce} and the transitivity of the `represents' relation, it follows that $\calA_x$ represents $2^{V_x}$. So it suffices to prove that $\calA_a \otimes \calA_b$ represents $2^{V_x}$. Let $Y \subseteq \overline{V_x}$. Note that 
		$$\begin{array}{rcl}
			\best(\calA_a \otimes \calA_b, Y) & = & \max\{ \we(X) + \we(W) \mid X \in \calA_a, W \in \calA_b, \\ & & \qquad\qquad X \cup W \cup Y \mbox{\ is a \Tis} \} \\
			& = & \max\{\best(\calA_a, W \cup Y) +\we(W) \mid W \in \calA_b \}.
		\end{array}$$
		Note that $\best(\calA_a, W \cup Y) = \best(2^{V_a}, W \cup Y)$, as $\calA_a$ represents $2^{V_a}$, and thus $\best(\calA_a \otimes \calA_b, Y) = \best(2^{V_a} \otimes \calA_b, Y)$. Using a similar argument, we can then show that $\best(2^{V_a} \otimes \calA_b, Y) = \best(2^{V_a} \otimes 2^{V_b}, Y)$. Since $2^{V_x} = 2^{V_a} \otimes 2^{V_b}$, it follows that $\best(\calA_a \otimes \calA_b, Y) = \best(2^{V_x}, Y)$ and thus $\calA_a \otimes \calA_b$ represents $2^{V_x}$. 
This completes the proof of Claim~\ref{c-claim2}.\dia
	
\medskip
\noindent	
Finally, we prove the running time bound. Using induction, it follows that $|\calA_a \otimes \calA_b| \leq p(x)^2$ for any internal node $x$ with children $a,b$. Hence, $\calA_a \otimes \calA_b$ can be computed in $O(p(x)^2 \cdot n)$ time. Then, $\calA_x$ can be computed in $O(p(x)^2 \cdot n^2 \log (\nec_1(V_x)) + \nec_1(V_x) \cdot n^2 \log (\nec_1(V_x)) + \nec_1(V_x)^2) = O((\nec_1(V_x))^4 \cdot n^2 \log (\nec_1(V_x)))$ time by Lemma~\ref{l-reduce}. 
\end{proof}

\noindent
It was shown by Belmonte and Vatshelle~\cite{BV13} that $\nec_d(A) \leq |A|^{d \cdot \mim(A)}$. Combining their result with Theorem~\ref{t-theo} immediately yields the following.

\begin{theorem}\label{t-svcmimresult}
	Let $G$ be a graph on $n$ vertices with a rooted layout $\calL = (L, \delta)$. Then {\sc Subset Vertex Cover} can be solved in $O(\mim_G(\calL) \cdot n^{4\mim_G(\calL) + 3} \cdot \log(n))$ time.
\end{theorem}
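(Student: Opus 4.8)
The plan is to derive this statement as a direct corollary of Theorem~\ref{t-theo} by controlling the quantity $\nec_1(V_x)$ in terms of the mim-width of the layout. First I would invoke the bound $\nec_d(A) \le |A|^{d\cdot\mim(A)}$ of Belmonte and Vatshelle, specialised to $d=1$, to obtain $\nec_1(V_x) \le |V_x|^{\mim(V_x)} \le n^{\mim(V_x)}$. Since $\mim(\calL)$ is by definition the maximum of $\mim(V_x)$ over all nodes $x \in V(L)$, this yields the uniform bound $\nec_1(V_x) \le n^{\mim(\calL)}$ for every node $x$.

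Next I would substitute this bound into the running time $O\bigl(\sum_{x \in V(L)} (\nec_1(V_x))^4 \cdot n^2 \log(\nec_1(V_x))\bigr)$ from Theorem~\ref{t-theo}. Each summand becomes at most $(n^{\mim(\calL)})^4 \cdot n^2 \cdot \log(n^{\mim(\calL)}) = n^{4\mim(\calL)+2}\cdot \mim(\calL)\log n$, which is $n^{O(\mim(\calL))}$. Because $L$ is a rooted binary tree with exactly $n$ leaves, it has $O(n)$ nodes, so the sum has $O(n)$ terms; multiplying the per-term bound by this factor of $O(n)$ only shifts the exponent by an additive constant, and the total remains $n^{O(\mim(\calL))}$.

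There is no genuine obstacle here, as the statement is a clean corollary; the only thing to handle carefully is that the $\nec_1$ bound must be applied uniformly across all nodes of $L$, and that the logarithmic and low-degree polynomial-in-$n$ factors are absorbed into the $O(\mim(\calL))$ exponent. The one subtlety worth flagging is that the theorem takes the layout $\calL$ as part of the input and does not address how to \emph{find} it. Consequently, to obtain genuinely polynomial-time algorithms on concrete graph classes of bounded mim-width, one would additionally appeal to the result of Belmonte and Vatshelle~\cite{BV13} that an optimal-mim-width layout can be computed in polynomial time for classes such as interval, permutation, and circular-arc graphs, for which $\mim(\calL)$ is then a constant and the bound collapses to a fixed polynomial.
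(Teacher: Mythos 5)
Your proposal is correct and matches the paper's argument exactly: the paper also derives this as an immediate corollary of Theorem~\ref{t-theo} by applying the Belmonte--Vatshelle bound $\nec_d(A) \leq |A|^{d \cdot \mim(A)}$ with $d=1$. Your added remarks on the $O(n)$ tree size and on computing the layout for concrete classes are consistent with what the paper handles elsewhere (Corollary~\ref{cor:intcircarc}).
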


\noindent
The following corollary is now immediate from the fact that circular-arc graphs have constant mim-width and a rooted layout of constant mim-width can be computed in polynomial time~\cite{BV13}.

\begin{corollary}\label{cor:intcircarc}
	{\sc Subset Vertex Cover} can be solved in polynomial time on circular-arc graphs.
\end{corollary}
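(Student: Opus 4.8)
The plan is to combine the $n^{O(\mim(\calL))}$-time algorithm established in the preceding theorem with known structural results on the mim-width of interval and circular-arc graphs. The key observation is that the running time in that theorem is polynomial precisely when the supplied rooted layout $\calL$ has bounded mim-width, so it suffices to exhibit, for every interval or circular-arc graph $G$, a rooted layout of constant mim-width that can be produced in polynomial time.

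First I would invoke the result of Belmonte and Vatshelle~\cite{BV13}, who showed that both interval graphs and circular-arc graphs have mim-width bounded by an absolute constant, and moreover that for these classes a rooted layout $\calL$ with $\mim(\calL)$ equal to the mim-width of $G$ can be computed in polynomial time. Given such a layout, $\mim(\calL) = O(1)$, so the exponent $O(\mim(\calL))$ in the preceding theorem is a constant and the algorithm runs in $n^{O(1)}$ time. Feeding this layout into the algorithm therefore solves {\sc Subset Vertex Cover} in polynomial time on these classes, which is exactly the claim.

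There is essentially no obstacle here: all of the genuine work sits in the dynamic-programming algorithm of Theorem~\ref{t-theo} together with the representative-reduction machinery of Lemma~\ref{l-reduce}, while the corollary is only an instantiation. The one point that deserves care is that bounded mim-width on its own is \emph{not} known to yield a polynomial-time algorithm, since in general no efficient procedure for computing a layout of small width is available; we therefore rely specifically on the fact that for interval and circular-arc graphs such a layout is simultaneously of constant width and polynomial-time computable, which is precisely what~\cite{BV13} supplies.
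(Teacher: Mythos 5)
Your proposal is correct and matches the paper's own justification: the corollary is obtained by combining the $n^{O(\mim(\calL))}$-time algorithm with the result of Belmonte and Vatshelle~\cite{BV13} that interval and circular-arc graphs admit a rooted layout of constant mim-width computable in polynomial time. Your remark that one must rely on the polynomial-time computability of the layout (not merely on bounded mim-width) is exactly the caveat the paper itself raises.
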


\noindent
As interval graphs are circular-arc, Corollary~\ref{cor:intcircarc} also holds for interval graphs, but we also note that the result for interval graphs already follows from Theorem~\ref{t-chordal}.

\section{Conclusions}\label{s-con}

Apart from giving a dichotomy for {\sc Subset Vertex Cover} restricted to instances $(G,T,k$) where $G[T]$ is $H$-free (Theorem~\ref{t-dicho4}), we gave a partial classification of {\sc Subset Vertex Cover} for $H$-free graphs (Theorem~\ref{t-dicho}). Our partial classification resolved two open problems from the literature and showed that for some hereditary graph classes, {\sc Subset Vertex Cover} is computationally harder than {\sc Vertex Cover} (if $\sP\neq \NP$). This is in contrast to the situation for graph classes closed under edge deletion. Hence, {\sc Subset Vertex Cover} is worth studying on its own, instead of only as an auxiliary problem (as in~\cite{BJPP22}). In order to complete the classification of {\sc Subset Vertex Cover} for $H$-free graphs it remains to solve precisely the following open cases.

\begin{open}
Determine the computational complexity of {\sc Subset Vertex Cover} on $H$-free graphs, where
\begin{itemize}
\item 
$H=sP_2+P_3$ for $s\geq 2$; or 
\item 
$H=sP_2+P_4$ for $s\geq 1$; or
\item 
$H=sP_2+P_t$ for $s\geq 0$ and $t\in \{5,6\}$.
\end{itemize}
\end{open}

\noindent
Brettell et al.~\cite{BJPP22} asked what the complexity of {\sc Subset Vertex Cover} is for $P_5$-free graphs. In contrast, {\sc Vertex Cover} is polynomial-time solvable even for $P_6$-free graphs~\cite{GKPP22}. 
However, the open cases where $H=sP_2+P_t$ ($s\geq 1$ and $t\in \{4,5,6\}$) are even open for {\sc Vertex Cover} on $H$-free graphs (though a quasipolynomial-time algorithm is known~\cite{GL20,PPR21}). So for those cases we may want to first restrict ourselves to {\sc Vertex Cover} instead of {\sc Subset Vertex Cover}, 
or aim for a quasipolynomial-time algorithm first.

We also note that our polynomial-time algorithms for {\sc Subset Vertex Cover} for $sP_2$-free graphs and $(P_2+P_3)$-free graphs can easily be adapted to work for {\sc Weighted Subset Vertex Cover} for $sP_2$-free graphs and $(P_2+P_3)$-free graphs. In this more general problem variant, each vertex~$u$ is given some positive weight $w(u)$, and the question is whether there exists a $T$-vertex cover $S$ with weights $w(S)=\sum_{u\in S}w(u)\leq k$. In contrast, Papadopoulos and Tzimas~\cite{PT20} proved that {\sc Weighted Subset Feedback Vertex Set} is \NP-complete for $5P_1$-free graphs, whereas {\sc Subset Feedback Vertex Set} is polynomial-time solvable even for $(sP_1+P_4)$-free graphs for every $s\geq 1$~\cite{PPR22b} (see also Theorem~\ref{t-dicho2}). The hardness construction of Papadopoulos and Tzimas~\cite{PT20} can also be used to prove that {\sc Weighted Odd Cycle Transversal} is \NP-complete for $5P_1$-free graphs~\cite{BJP22}, while {\sc Subset Odd Cycle Transversal} is polynomial-time solvable even for $(sP_1+P_3)$-free graphs for every $s\geq 1$~\cite{BJPP22} (see also Theorem~\ref{t-dicho3}).

Finally, we recall from Theorem~\ref{t-chordal} that \svc{} can be solved in polynomial time on chordal graphs by using a reduction to \vc{} on perfect graphs and applying the linear programming method of~\cite{GLS84}. 
However, it is not known if this algorithm is strongly-polynomial.
In contrast, we gave a purely combinatorial algorithm for probe split graphs in Theorem~\ref{t-sp2} and for probe interval graphs in Corollary~\ref{cor:intcircarc}.
This makes the following question interesting.

\begin{open}\label{o-chordal}
Give a combinatorial algorithm for {\sc Subset Vertex Cover} on the class of chordal graphs.
\end{open}
 
\noindent
A standard approach for {\sc Vertex Cover} on chordal graphs is dynamic programming over the clique tree of a chordal graph. However, a naive dynamic programming algorithm over the clique tree does not work for {\sc Subset Vertex Cover}. This is because we may need to remember an exponential number of subsets of a bag (clique) and the bags can have arbitrarily large size. 
Hence, to solve Open Problem~\ref{o-chordal} new ideas are needed.

\bibliographystyle{plain}

\begin{thebibliography}{10}

\bibitem{Al82}
Vladimir~E. Alekseev.
\newblock The effect of local constraints on the complexity of determination of
  the graph independence number.
\newblock {\em Combinatorial-Algebraic Methods in Applied Mathematics}, pages
  3--13, 1982 (in Russian).

\bibitem{BY89}
Egon Balas and Chang~Sung Yu.
\newblock On graphs with polynomially solvable maximum-weight clique problem.
\newblock {\em Networks}, 19(2):247--253, 1989.

\bibitem{BV13}
R{\'{e}}my Belmonte and Martin Vatshelle.
\newblock Graph classes with structured neighborhoods and algorithmic
  applications.
\newblock {\em Theoretical Computer Science}, 511:54--65, 2013.

\bibitem{BPT22}
Benjamin Bergougnoux, Charis Papadopoulos, and Jan~Arne Telle.
\newblock Node {M}ultiway {C}ut and {S}ubset {F}eedback {V}ertex {S}et on
  graphs of bounded mim-width.
\newblock {\em Algorithmica}, 84(5):1385--1417, 2022.

\bibitem{BondyMurtyGraphTheory}
J. Adrian Bondy and Uppaluri S. R. Murty.
\newblock {\em Graph Theory with Applications}.
\newblock Macmillan Education {UK}, 1976.

\bibitem{BM18}
Andreas Brandst{\"{a}}dt and Raffaele Mosca.
\newblock Maximum {W}eight {I}ndependent {S}et for $\ell$claw-free graphs in
  polynomial time.
\newblock {\em Discrete Applied Mathematics}, 237:57--64, 2018.

\bibitem{BGMPP22}
Nick Brettell, Jake Horsfield, Andrea Munaro, Giacomo Paesani, and Dani\"el Paulusma, Bounding the mim-width of hereditary graph classes, {\em Journal of Graph Theory} 99:117--151, 2022.

\bibitem{BJPP22}
Nick Brettell, Matthew Johnson, Giacomo Paesani, and Dani{\"{e}}l Paulusma.
\newblock Computing subset transversals in ${H}$-free graphs.
\newblock {\em Theoretical Computer Science}, 902:76--92, 2022.

\bibitem{BJP22}
Nick Brettell, Matthew Johnson, and Dani{\"{e}}l Paulusma.
\newblock Computing weighted subset odd cycle transversals in ${H}$-free
  graphs.
\newblock {\em Journal of Computer and System Sciences}, 128:71--85, 2022.

\bibitem{BTV13}
Binh{-}Minh Bui{-}Xuan, Jan~Arne Telle, and Martin Vatshelle.
\newblock Fast dynamic programming for locally checkable vertex subset and
  vertex partitioning problems.
\newblock {\em Theoretical Computer Science}, 511:66--76, 2013.

\bibitem{CCKLP12}
David~B. Chandler, Maw-Shang Chang, Ton Kloks, Jiping Liu, Sheng-Lung Peng.
\newblock {P}robe {G}raph {C}lasses.
\newblock Manuscript, 2012.

\bibitem{CKKLP05}
Maw{-}Shang Chang, Ton Kloks, Dieter Kratsch, Jiping Liu, Sheng{-}Lung Peng.
\newblock {O}n the {R}ecognition of {P}robe {G}raphs of {S}ome {S}elf-{C}omplementary {C}lasses of {P}erfect {G}raphs.
\newblock {\em Proc. {COCOON} 2005}, 3595:808--817, 2005.

\bibitem{CHJMP18}
Nina Chiarelli, Tatiana~R. Hartinger, Matthew Johnson, Martin Milani\v{c}, and
  Dani\"el Paulusma.
\newblock Minimum connected transversals in graphs: {N}ew hardness results and
  tractable cases using the price of connectivity.
\newblock {\em Theoretical Computer Science}, 705:75--83, 2018.

\bibitem{CER93}
Bruno Courcelle, Joost Engelfriet, and Grzegorz Rozenberg. Handle-rewriting hypergraph grammars. {\em Journal of Computer and System Sciences}, 46:218--270, 1993.

\bibitem{CMR00}
Bruno Courcelle and Johann A. Makowsky and Udi Rotics. Linear Time Solvable Optimization Problems on Graphs of Bounded Clique-Width. {\em Theory Comput. Syst.} 33:2:125--150, 2000.

\bibitem{CO00}
Bruno Courcelle and Stephan Olariu. Upper bounds to the clique width of graphs. {\em Discrete Applied Mathematics} 101:77--114, 2000.

\bibitem{CRST06}
Maria Chudnovsky, Neil Robertson, Paul Seymour, and Robin Thomas.
\newblock The {S}trong {P}erfect {G}raph {T}heorem.
\newblock {\em Annals of Mathematics}, 164:51--229, 2006.

\bibitem{FHKPV14}
Fedor~V. Fomin, Pinar Heggernes, Dieter Kratsch, Charis Papadopoulos, and Yngve
  Villanger.
\newblock Enumerating minimal subset feedback vertex sets.
\newblock {\em Algorithmica}, 69:216--231, 2014.

\bibitem{GL20}
Peter Gartland and Daniel Lokshtanov.
\newblock Independent {S}et on ${P}_k$-free graphs in quasi-polynomial time.
\newblock {\em Proc. FOCS 2020}, pages 613--624, 2020.

\bibitem{GLMPPR}
Peter Gartland, Daniel Lokshtanov, Tom\'a\v{s} Masa\v{r}\'ik, Marcin Pilipczuk,
  Michal Pilipczuk, and Pawe{\l} Rz{\k a}\.{z}ewski.
\newblock Maximum {W}eight {I}ndependent set in graphs with no long claws in
  quasi-polynomial time.
\newblock {\em CoRR}, arXiv:2305.15738, 2023.

\bibitem{GL04}
Martin C. Golumbic, Marina Lipshteyn.
\newblock Chordal probe graphs.
\newblock {\em Discrete Applied Mathematics}, 143:221--237, 2004.

\bibitem{GLS84}
Martin Gr\"otschel, L\'aszl\'o Lov\'asz, and Alexander Schrijver.
\newblock Polynomial algorithms for perfect graphs.
\newblock {\em Annals of Discrete Mathematics}, 21:325--356, 1984.

\bibitem{GKPP22}
Andrzej Grzesik, Tereza Klimo\v{s}ov{\'{a}}, Marcin Pilipczuk, and Michal
  Pilipczuk.
\newblock Polynomial-time algorithm for {M}aximum {W}eight {I}ndependent {S}et
  on ${P}_6$-free graphs.
\newblock {\em {ACM} Transactions on Algorithms}, 18:4:1--4:57, 2022.

\bibitem{Konig31}
D{\'e}nes K{\H{o}}nig.
\newblock Gr{\'a}fok {\'e}s m{\'a}trixok.
\newblock {\em Matematikai {\'e}s Fizikai Lapok}, 38:116--119, 1931.

\bibitem{Lo17}
Vadim~V. Lozin.
\newblock From matchings to independent sets.
\newblock {\em Discrete Applied Mathematics}, 231:4--14, 2017.

\bibitem{LM12}
Vadim~V. Lozin and Raffaele Mosca.
\newblock Maximum regular induced subgraphs in $2{P}_3$-free graphs.
\newblock {\em Theoretical Computer Science}, 460:26--33, 2012.

\bibitem{MT03}
Yury Metelsky and Regina Tyshkevich. 
\newblock Line graphs of Helly hypergraphs. 
\newblock {\em {SIAM} Journal on Discrete Mathematics}, 16:438--448, 2003.

\bibitem{Mo01}
Bojan Mohar.
\newblock Face covers and the genus problem for apex graphs.
\newblock {\em Journal of Combinatorial Theory, Series {B}}, 82:102--117,
  2001.

\bibitem{Mu17b}
Andrea Munaro.
\newblock On line graphs of subcubic triangle-free graphs.
\newblock {\em Discrete Mathematics}, 340:1210--1226, 2017.

\bibitem{PPR22b}
Giacomo Paesani, Dani{\"{e}}l Paulusma, and Pawel Rzazewski.
\newblock Classifying {S}ubset {F}eedback {V}ertex set for ${H}$-free graphs.
\newblock {\em Proc. WG 2022, LNCS}, 13453:412--424, 2022.

\bibitem{PT20}
Charis Papadopoulos and Spyridon Tzimas.
\newblock Subset {F}eedback {V}ertex set on graphs of bounded independent set
  size.
\newblock {\em Theoretical Computer Science}, 814:177--188, 2020.

\bibitem{PPR21}
Marcin Pilipczuk, Michal Pilipczuk, and Pawe{\l} Rz{\k a}\.{z}ewski.
\newblock Quasi-polynomial-time algorithm for {I}ndependent {S}et in
  ${P}_t$-free graphs via shrinking the space of induced paths.
\newblock {\em Proc. {SOSA} 2021}, pages 204--209, 2021.

\bibitem{Po74}
Svatopluk Poljak.
\newblock A note on stable sets and colorings of graphs.
\newblock {\em Commentationes Mathematicae Universitatis Carolinae},
  15:307--309, 1974.

\bibitem{TAS77}
Shuji Tsukiyama, Mikio Ide, Hiromu Ariyoshi, and Isao Shirakawa.
\newblock A new algorithm for generating all the maximal independent sets.
\newblock {\em {SIAM} Journal on Computing}, 6:505--517, 1977.

\bibitem{Va12}
Martin Vatshelle.
\newblock {\em New Width Parameters of Graphs}.
\newblock PhD thesis, University of Bergen, Norway, 2012.

\bibitem{Z94}
Peisen Zhang, Eric A.~Schon, Stuart G.~Fisher, Eftihia Cayanis, Janie Weiss, Susan Kistler, and Philip E.~Bourne.
\newblock An algorithm based on graph theory for the assembly of contigs in physical mapping of DNA.
\newblock {\em CABIOS}, 10:309--317, 1994.

\end{thebibliography}

\end{document}